\numberwithin{equation}{section}
\newtheorem{theorem}{Theorem}[section]
\newtheorem{lemma}[theorem]{Lemma}
\newtheorem{proposition}[theorem]{Proposition}
\theoremstyle{definition}
\newtheorem{definition}[theorem]{Definition}
\newtheorem{example}[theorem]{Example}
\newtheorem{coexample}[theorem]{Counter Example}
\newenvironment{remark}[1][Remark.]{\begin{trivlist}
\item[\hskip \labelsep {\bfseries #1}]  }{ \end{trivlist}}
\newcommand{\rmd}{\textnormal{d}}
\newcommand{\rme}{\textnormal{e}}
\newcommand{\rmh}{\textnormal{h}}
\DeclareMathOperator{\Vect}{Vect}
\DeclareMathOperator{\Span}{Span}
\DeclareMathOperator{\Image}{Im}
\DeclareMathOperator{\tr}{tr}
 \DeclareMathOperator{\sgn}{sgn}
\font\black=cmbx10 \font\sblack=cmbx7 \font\ssblack=cmbx5 \font\blackital=cmmib10  \skewchar\blackital='177
\font\sblackital=cmmib7 \skewchar\sblackital='177 \font\ssblackital=cmmib5 \skewchar\ssblackital='177
\font\sanss=cmss10 \font\ssanss=cmss8 %scaled 900
\font\sssanss=cmss8 scaled 600 \font\blackboard=msbm10 \font\sblackboard=msbm7 \font\ssblackboard=msbm5
\font\caligr=eusm10 \font\scaligr=eusm7 \font\sscaligr=eusm5  \font\fraktur=eufm10
\font\sfraktur=eufm7 \font\ssfraktur=eufm5 
\font\bsymb=cmsy10 scaled\magstep2
\def\all#1{\setbox0=\hbox{\lower1.5pt\hbox{\bsymb
       \char"38}}\setbox1=\hbox{$_{#1}$} \box0\lower2pt\box1\;}
\def\exi#1{\setbox0=\hbox{\lower1.5pt\hbox{\bsymb \char"39}}
       \setbox1=\hbox{$_{#1}$} \box0\lower2pt\box1\;}
\def\tx#1{{\fam0\relax#1}}
\def\sss#1{{\fam\ssfam\relax#1}}
\def\hpb#1{\setbox0=\hbox{${#1}$}
    \copy0 \kern-\wd0 \kern.2pt \box0}
\def\vpb#1{\setbox0=\hbox{${#1}$}
    \copy0 \kern-\wd0 \raise.08pt \box0}
\def\pmb#1{\setbox0\hbox{${#1}$} \copy0 \kern-\wd0 \kern.2pt \box0}
\def\pmbb#1{\setbox0\hbox{${#1}$} \copy0 \kern-\wd0
      \kern.2pt \copy0 \kern-\wd0 \kern.2pt \box0}
\def\pmbbb#1{\setbox0\hbox{${#1}$} \copy0 \kern-\wd0
      \kern.2pt \copy0 \kern-\wd0 \kern.2pt
    \copy0 \kern-\wd0 \kern.2pt \box0}
\def\pmxb#1{\setbox0\hbox{${#1}$} \copy0 \kern-\wd0
      \kern.2pt \copy0 \kern-\wd0 \kern.2pt
      \copy0 \kern-\wd0 \kern.2pt \copy0 \kern-\wd0 \kern.2pt \box0}
\def\pmxbb#1{\setbox0\hbox{${#1}$} \copy0 \kern-\wd0 \kern.2pt
      \copy0 \kern-\wd0 \kern.2pt
      \copy0 \kern-\wd0 \kern.2pt \copy0 \kern-\wd0 \kern.2pt
      \copy0 \kern-\wd0 \kern.2pt \box0}
\mathchardef\za="710B  %\alpha
\mathchardef\zb="710C  %\beta
\mathchardef\zg="710D  %\gamma
\mathchardef\zd="710E  %\delta
\mathchardef\zve="710F %\epsilon
\mathchardef\zz="7110  %\zeta
\mathchardef\zh="7111  %\eta
\mathchardef\zvy="7112 %\theta
\mathchardef\zi="7113  %\iota
\mathchardef\zk="7114  %\kappa
\mathchardef\zl="7115  %\lambda
\mathchardef\zm="7116  %\mu
\mathchardef\zn="7117  %\nu
\mathchardef\zx="7118  %\xi
\mathchardef\zp="7119  %\pi
\mathchardef\zr="711A  %\rho
\mathchardef\zs="711B  %\sigma
\mathchardef\zt="711C  %\tau
\mathchardef\zu="711D  %\upsilon
\mathchardef\zvf="711E %\phi
\mathchardef\zq="711F  %\chi
\mathchardef\zc="7120  %\psi
\mathchardef\zw="7121  %\omega
\mathchardef\ze="7122  %\varepsilon
\mathchardef\zy="7123  %\vartheta
\mathchardef\zf="7124  %\varomega
\mathchardef\zvr="7125 %\varrho
\mathchardef\zvs="7126 %\varsigma
\mathchardef\zf="7127  %\varphi
\mathchardef\zG="7000  %\Gamma
\mathchardef\zD="7001  %\Delta
\mathchardef\zY="7002  %\Theta
\mathchardef\zL="7003  %\Lambda
\mathchardef\zX="7004  %\Xi
\mathchardef\zP="7005  %\Pi
\mathchardef\zS="7006  %\Sigma
\mathchardef\zU="7007  %\Upsilon
\mathchardef\zF="7008  %\Phi
\mathchardef\zW="700A  %\Omega
\mathchardef\zC="7009  %\Psi
\newcommand{\be}{\begin{equation}}
\newcommand{\ee}{\end{equation}}
\newcommand{\bea}{\begin{eqnarray}}
\newcommand{\eea}{\end{eqnarray}}
\def\*{{\textstyle *}}
\newcommand{\R}{{\mathbb R}}
\newcommand{\Z}{{\mathbb Z}}
\newcommand{\s}{{\textstyle *}}
\def\Sec{\sss{Sec}}
\def\Vect{\sss{Vect}}
\def\sH{{\sss H}}
\def\sT{{\sss T}}
\def\sV{{\sss V}}
\def\xi{\tx{i}}
\def\s*{{\scriptstyle *}}
\newcommand{\beas}{\begin{eqnarray*}}
\newcommand{\eeas}{\end{eqnarray*}}
\def\half{\frac{1}{2}}
\title{Carrollian $\R^\times$-bundles: Connections and Beyond} 
\author{Andrew James Bruce } 
   \email{andrewjamesbruce@googlemail.com}
   \date{\today}
\begin{document}
 %%%%%%%%%%%%%%%%%%%%%%%%%%%
 \maketitle
%%%%%%%%%%%%%%%%%%%%%%%%%%%
\vspace{-20pt}
%%%%%%%%%%%%%%%%%%%%
\begin{abstract}{\noindent  We propose an approach to Carrollian geometry using principal $\R^\times$-bundles ($\R^\times := \R \setminus \{0\}$) equipped with a degenerate metric whose kernel is the module of vertical vector fields. The constructions allow for non-trivial bundles, and a large class of Carrollian manifolds can be analysed in this formalism. A key result in this is that once a principal connection has been selected, there is a canonical non-degenerate metric that can be leveraged to circumvent the difficulties associated with a degenerate metric. Within this framework, we examine the Levi-Civita connection and null geodesics.}\\

\noindent {\Small \textbf{Keywords:} Carrollian Geometry;~Principal Bundles}\\
\noindent {\small \textbf{MSC 2020:} 53B05;~53B15;~53C50;~53Z05;~58A30}\\

\end{abstract}
\tableofcontents
\section{Introduction}
Carrollian manifolds were introduced by Duval et al. \cite{Duval:2014a,Duval:2014b, Duval:2014} (earlier related works include  Lévy-Leblond \cite{Lévy-Leblond:1965}, Sen Gupta \cite{SenGupta:1966}, and Heanneaux \cite{Heanneaux:1979}) as manifolds equipped with a degenerate metric whose kernel is spanned by a nowhere vanishing complete vector field. Natural examples of Carrollian manifolds include null hypersurfaces, such as punctured future or past light-cones in Minkowski spacetime, and the event horizon of a Schwarzschild black hole.  Applications in theoretical physics of Carrollian geometry include the study of black hole horizons (see \cite{Donnay:2019}), boundaries of asymptotically flat spacetimes (see \cite{Duval:2014b}), relativistic strings in the limit $c  \rightarrow 0$ (see \cite{Cardona:2016}), and cosmology (see \cite{deBoer:2022}), to name a few. A review of a wide range of applications of Carrollian physics can be found in \cite{Bagchi:2025}. The nomenclature ``Carrollian" was introduced by Lévy-Leblond, who noticed that in the ultra-relativistic limit, massive particles move at infinite speed and yet remain stationary. This is somewhat similar to the situation described in the dialogue between the Red Queen and Alice in Lewis Carroll's \emph{Through the Looking Glass}.  The picture to keep in mind is that in the ultra-relativistic limit ($c\rightarrow 0$), light cones in Minkowski spacetime collapse to lines in the time direction for every observer. Causality and dynamics are lost, but remarkably, theories in this limit are still useful.  \par 
In this paper, we present a novel approach to Carrollian geometry using principal $\R^\times$-bundles\footnote{Here we set $\R^\times := \R \setminus \{0\}$ with the group operation being standard multiplication.} equipped with a degenerate metric whose kernel is the module of vertical vector fields (see Definition \ref{def:CarrBun}). As the vertical bundle is one-dimensional, we view the metric as being minimally degenerate. To motivate our use of principal $\R^\times$-bundles, consider the punctured light cone in Minkowski spacetime $C \cong S^2 \times \R^\times$.  The half cone was discussed by Duval et al. \cite{Duval:2014b}. We can describe the points on the cone as $(t\, \mathbf{u}, t)\in \R^{1,3}$, with $\mathbf{u} \in S^2$ and $t \in \R^\times$. The projection we define as $(t\, \mathbf{u}, t)\mapsto \mathbf{u}$. The $\R^\times$-action is multiplication of the fibre coordinate by a non-zero real number. The fundamental vector field of this action, which we will refer to as the Euler vector field, is $\Delta = t \partial_t$. The degenerate metric we define as $g = t^2\,g_{S^2}$, where $g_{S^2}$ is the round metric on $S^2$. Note that $\ker(g)$ is spanned by the Euler vector field. Our notion of a Carrollian $\R^\times$-bundle will closely mimic this natural example. \par 
We prove that Carrollian manifolds that have a simple foliation by $\R$ induced by the kernel of the degenerate metric (we call these simple Carrollian manifolds, see Definition \ref{def:CarrMan}) can, non-canonically, be associated with Carrollian $\R^\times$-bundles (see Theorem \ref{thrm:CarrManAreCarrBun}). Thus, our formalism covers a physically interesting class of Carrollian manifolds, including those whose temporal fibrations lead to non-trivial fibre bundles. The philosophy is that working with the principal bundle picture can simplify the mathematics and, with care, shed light on Carrollian geometry.  \par
Other results include:
\begin{itemize}
\item If the fundamental vector field of the $\R^\times$-action is Killing, then there is an induced (non-degenerate) metric on the base manifold - Proposition \ref{prop:KillingEulerMetric}.
\item If the fundamental vector field is Killing, then all vertical vector fields are Killing - Proposition \ref{prop:Killing}. 
\item Once an $\R^\times$-connection has been chosen, then there is (canonically) an affine connection on the principal bundle that is torsionless, but not, in general, metric compatible - Proposition \ref{prop:TorsionlessCon}.
\item  The constructed affine connection on the $\R^\times$-principal bundle extends to an affine connection on the associated line bundle, provided there is a regularity condition on the vector fields - Proposition \ref{prop:ExtLCcon}.
\end{itemize}
We remark that underlying the existence of an affine connection from an $\R^\times$-connection is a Kaluza--Klein geometry. The construction is to build a non-degenerate metric from the degenerate metric and a connection, and then to consider the associated Levi-Civita connection. Within the context of standard Carrollian geometry, Blitz and McNut \cite{Blitz:2024} argue that affine connections on a Carrollian manifold that are metric and compatible with the fundamental vector field, but may have torsion, are natural. However, torsionless connections that are not metric-compatible have appeared in the literature. For example, Chandrasekaran et al. \cite{Chandrasekaran:2022} find, in the context of the null Brown-York stress tensor and related conservation laws, that torsionless and, generally, non-metric connections naturally appear. It is well known that Carrollian manifolds do not come equipped with a canonical affine connection akin to the Levi-Civita connection of Riemannian geometry. Connections have to be proposed as extra data based on their required properties.  The approach we propose does not fully resolve this issue; principal connections are not, in general, unique. However, on a trivial $\R^\times$-bundle, there is always the trivial connection; this canonical choice will simplify some physically interesting examples.
\medskip 

\noindent \textbf{Looking Forward:}  The framework of Carrollian $\R^\times$-bundles equipped with a connection allows the machinery of principal bundles and (pseudo-)Riemannian geometry to be applied to Carrollian geometry. Recall that a \emph{Carrollian bundle} (see \cite{Ciambelli:2019}) is a triple $(E, g, \kappa)$, where $\pi :E\rightarrow M$ is a fibre bundle with typical fibre $\R$, $g$ is a degenerate metric on $E$ of signature $(1,1, \cdots, 1, 0)$ (the zero is in the fibre position), and $\kappa \in \Vect(E)$ is a complete and non-singular Killing vector field, such that $\ker(g) = \Span \{\kappa\}$. The selection of a section of $E$ allows a linearisation of $E$, which is a line bundle $L$ over $M$. Moreover, there is a fibre-preserving diffeomorphism that can be composed with the smooth inclusion $P \hookrightarrow L$ (see the proof of Theorem \ref{thrm:CarrManAreCarrBun} for details)
$$P \hookrightarrow L \stackrel{\sim}{\rightarrow} E\,,$$
where  $P = L^\times = L\setminus \{0_M\}$ is the associated $\R^\times$-principal bundle. 
The bundle $P$ can then be equipped with the structure of a Carrollian $\R^\times$-bundle (see Definition \ref{def:CarrBun}) induced by the Carrollian structure on $E$. For example, the Riemannian structure associated with a principal connection is not well-defined on $L$; the connection contains a factor of $ t^{-1}$. Nonetheless, one can work on the Carrollian $\R^\times$-bundle, and then carefully examine if the notions/constructions survive the smooth inclusion. The author has obtained results on constructing intrinsic sigma models on a Carrollian manifold (see \cite{Bruce:2025a}), and with defining the Hodge--de Rham Laplacian in this setting (see \cite{Bruce:2025b}). Proposition \ref{prop:ExtLCcon} on expanding the affine connection from $P$ to $L$ is part of this philosophy of working with principal bundles as the primary mathematical objects. 
\medskip

\noindent \textbf{Arrangement:} In Section \ref{sec:Prelims}, we recall the required theory of $\R^\times$-principal bundles, including $\R^\times$-connections, as needed later in this paper. In Section \ref{sec:CarrGeom}, we present the core ideas of this paper. In Subsection \ref{subsec:CarRBundles}, the notion of a Carrollian $\R^\times$-bundle is carefully presented, including some simple examples. The vital theorem relating Carrollian manifolds (quite generally understood) to Carrollian $\R^\times$-bundles is given in subsection \ref{subsec:CarManRBundles}. The illustrative and physically relevant examples of the event horizons of the Schwarzschild black hole and non-spinning Thakurta spacetime are presented. In Subsection \ref{subsec:CarRBunConn}, we explore the consequences of equipping a Carrollian $\R^\times$-bundle with a principal connection. It is in this subsection that the existence of a non-degenerate metric is shown, and some direct consequences thereof are given. For example, properties of the associated Levi-Civita connection are explored. We then, in Subsection \ref{subsec:NullGeos}, use null geodesics as a probe to explore the geometry of a Carrollian $\R^\times$-bundle equipped with a principal bundle. The results are physically suggestive as `photons' that carry `Carrollian energy' are dynamic. We end with a few concluding remarks in Section \ref{sec:ConRem}.
\medskip 

\noindent \textbf{Conventions:} By manifold, we mean a smooth manifold that is real, finite-dimensional, Hausdorff, and second-countable. We will generally assume that our manifolds are connected, or when necessary, we will work on a connected component. 
%
%%%%%%%%%%%%%%%%%%%%%%%%
%
\section{Preliminaries of Principal $\R^\times$-bundles}\label{sec:Prelims}
\subsection{Principal $\R^\times$-bundles} In this subsection, we will draw heavily on Bruce, Grabowska \& Grabowski \cite{Bruce:2017} and Grabowski \cite{Grabowski:2013} for details of line bundles and their reformulation in terms of principal $\R^\times$-bundles.  To keep this paper relatively self-contained, we recall the concepts needed later in this paper. Consider a line bundle $\pi : L \rightarrow M$. We can construct a new fibre bundle over $M$ by removing the zero section
$$L \longmapsto L^\times := L \setminus \{0_M\}\,.$$
Note that this new fibre bundle is not a line bundle, but now a principal bundle. We will describe this structure using local coordinates $(x^a, t)$.  We can multiply the fibre coordinate $t$ by any non-zero number. Thus, we have a fibrewise action of $\R^\times :=  \R \setminus \{0\} = \mathrm{GL}(1,\R)$ on $L^\times$. The reason why the non-compact group $\R^\times$ appears in the theory is that real line bundles over a manifold $M$ are classified by a $\Z_2$-valued cohomology of $M$, and the transition functions for these bundles take values in $\R^\times$.   We will change notation and set $P := L^\times$, and refer to it as a \emph{principal $\R^\times$-bundle} or just a \emph{$\R^\times$-bundle}.  The right action, we will refer to as a \emph{homogeneity structure}, and in local coordinates it is given as
\begin{align}\label{eqn:RtimesAct}
\rmh  & : P \times \R^\times \longrightarrow P\\
\nonumber & \rmh^*_s (x^a, t) = (x^a, st)\,.
\end{align}
The inverse procedure of passing from a principal $\R^\times$-bundle we denote as $P \mapsto \bar{P} =: L$. The fundamental vector field of the action \eqref{eqn:RtimesAct} is the Euler vector field on $\bar{P}$ restricted to $P$, which locally is $\Delta_P = t \partial_t$. By minor abuse of nomenclature, we will still refer to the Euler vector field. Note that the Euler vector field provides the space of vertical vector fields on $P$ with a global basis. That is, every vertical vector field can globally be written as $X_\sV = f \Delta_P$, with $f \in C^\infty(P)$. \par
To set some terminology,  a tensor or tensor-like object $T$ on $P$ is said to be homogeneous and of weight $r\in \R$, if  
$$\mathcal{L}_{\Delta_P} T = r \, T\,,$$
where $\mathcal{L}_{\Delta_P}$ is the Lie derivative with respect to the Euler vector field. 
\begin{proposition}
The association $L \mapsto P =: L^\times$ establishes a one-to-one correspondence between line bundles over $M$ and principal $\R^\times$-bundles.
\end{proposition}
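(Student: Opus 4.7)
My plan is to produce an explicit inverse construction $P \mapsto \bar{P}$ and verify that the two compositions are canonically the identity. Given a principal $\R^\times$-bundle $\pi_P : P \to M$, I would define $\bar{P}$ as the associated bundle
\[
\bar{P} := (P \times \R)\big/\R^\times\,,
\]
where $\R^\times$ acts diagonally by $s\cdot (p,v) = (p\cdot s^{-1}, sv)$, i.e. by the standard linear representation of $\R^\times = \mathrm{GL}(1,\R)$ on $\R$. The projection to $M$ is inherited from $\pi_P$, and each fibre acquires a one-dimensional real vector space structure from the $\R$-factor via $[p, v_1] + [p, v_2] := [p, v_1+v_2]$ and $\lambda [p,v] := [p, \lambda v]$ after choosing a common representative $p$ in the $\R^\times$-orbit. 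Local triviality of $\bar{P}$ follows immediately from local triviality of $P$: a trivialisation $P|_U \cong U \times \R^\times$ gives $\bar{P}|_U \cong U \times \R$ via $[(x,t),v] \mapsto (x, tv)$, and the transition functions of $\bar{P}$ coincide with those of $P$ as $\R^\times$-valued \v{C}ech cocycles.

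Next I would verify that the compositions $L \mapsto L^\times \mapsto \overline{L^\times}$ and $P \mapsto \bar{P} \mapsto (\bar{P})^\times$ are canonically the identity. In the first case, scalar multiplication in $L$ supplies a map $L^\times \times \R \to L$, $(p,v) \mapsto v \cdot p$, which is $\R^\times$-invariant and therefore descends to a smooth, fibrewise linear bijection $\overline{L^\times} \to L$ (sending $[p,0]$ to the zero section and $[p, v]$ with $v \ne 0$ to the nonzero vector $v\cdot p$). In the second case, the map $\phi : P \to (\bar{P})^\times$, $p \mapsto [p,1]$, is smooth and $\R^\times$-equivariant since $[p\cdot s, 1] = [p, s]$ by the defining equivalence, with explicit inverse $[p,v] \mapsto p\cdot v$ obtained by selecting the representative with $v=1$.

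The only mildly subtle step, more of a bookkeeping obligation than an obstacle, is ensuring that the manifold structure on $\bar{P}$ extends smoothly across the zero section. This is the standard content of the associated-bundle construction for a linear representation: in a trivialising chart the extended coordinates are $(x^a, \tau) \in U \times \R$, with the zero section $\{\tau = 0\}$ glued in smoothly, and its removal reproduces $P$. The coincidence of transition cocycles in $\R^\times = \mathrm{GL}(1,\R)$ then secures both the well-definedness of $\bar{P}$ as a smooth line bundle and the naturality of the correspondence $L \leftrightarrow P$, completing the bijection.
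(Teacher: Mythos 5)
Your proof is correct. The paper itself offers no proof of this proposition --- it is stated as a recollection from the cited literature on line bundles and $\R^\times$-bundles (\cite{Bruce:2017}, \cite{Grabowski:2013}) --- so there is no argument in the text to compare against; your write-up supplies exactly the standard missing argument. The inverse you construct, $\bar{P} = (P\times\R)/\R^\times$ with the diagonal action through the fundamental representation of $\mathrm{GL}(1,\R)$, is the associated-bundle construction, and your verification that both composites are canonically the identity (via $[p,v]\mapsto v\cdot p$ and $p\mapsto[p,1]$, both checked to be $\R^\times$-invariant resp.\ equivariant) is complete. Your observation that the transition cocycles of $P$ and $\bar{P}$ agree as $\R^\times$-valued \v{C}ech data is also the right conceptual summary: it ties the correspondence to the classification by $H^1(M;\Z_2)$ that the paper alludes to, and it is what guarantees the smooth gluing across the reinserted zero section. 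The only cosmetic remark is that the paper denotes the inverse by $P\mapsto\bar{P}=:L$ without specifying it, and your explicit quotient model is consistent with (and sharper than) that notation.
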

The above proposition allows us to work with principal $\R^\times$-bundles rather than line bundles when it is convenient to do so. 
\begin{proposition}
Let $\pi : P \rightarrow M$ be a principal $\R^\times$-bundle. Then $\sT P$ is a principal $\R^\times$-bundle with $\R^\times$
$$ \sT \rmh  : \sT P \times \R^\times  \rightarrow \sT P\,.$$
\end{proposition}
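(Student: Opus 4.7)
The plan is to apply the tangent functor directly to the action $\rmh : P \times \R^\times \to P$. For each $s \in \R^\times$ the map $\rmh_s : P \to P$ is a diffeomorphism, so its differential $\sT\rmh_s : \sT P \to \sT P$ is a diffeomorphism, and I would set $\sT\rmh(v, s) := \sT\rmh_s(v)$. Functoriality of $\sT$ immediately yields the group law $\sT\rmh_s \circ \sT\rmh_{s'} = \sT\rmh_{ss'}$ and $\sT\rmh_1 = \mathrm{id}_{\sT P}$, while smoothness in $(v,s)$ follows from the joint smoothness of $\rmh$.

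Next I would check freeness: if $\sT\rmh_s(v_p) = v_p$ for some $v_p \in \sT_p P$, then $\sT_{\rmh_s(p)} P$ must coincide with $\sT_p P$, forcing $\rmh_s(p) = p$, and freeness of $\rmh$ on $P$ then gives $s = 1$. Properness and local triviality I would verify in adapted coordinates $(x^a, t)$ on $P$ in which $\rmh_s(x^a, t) = (x^a, st)$: the induced coordinates $(x^a, t, \dot x^a, \dot t)$ on $\sT P$ transform as
$$\sT\rmh_s(x^a, t, \dot x^a, \dot t) = (x^a, st, \dot x^a, s\dot t)\,,$$
from which one reads off that the orbits are closed copies of $\R^\times$, that the action is proper, and that the $\R^\times$-invariants $(x^a, \dot x^a, \dot t / t)$ provide local coordinates on the quotient $\sT P / \R^\times$.

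The only genuine point requiring care is verifying that these local coordinates on the quotient piece together smoothly between overlapping adapted charts. Under a principal bundle transition $t \mapsto \varphi(x)\,t$ one finds $\dot t / t \mapsto \dot t / t + (\partial_a \log \varphi)\,\dot x^a$, which is smooth because $\varphi$ is nowhere vanishing; everything else is a formal consequence of applying $\sT$ to a principal action. As a by-product of the coordinate computation, the fundamental vector field of $\sT\rmh$ is the tangent prolongation $\sT\Delta_P = t\,\partial_t + \dot t\,\partial_{\dot t}$, which will be useful later for comparing weighted objects on $P$ with their tangent lifts.
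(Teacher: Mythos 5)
Your argument is correct and is essentially the route the paper takes implicitly: the paper states this proposition without proof and immediately records the same adapted coordinates $(x^a,t,\dot{\mathbf{t}},\dot{x}^b)$ with $\dot{\mathbf{t}}=t^{-1}\dot{t}$ and the transition law \eqref{eqn:AdmCooTrasTP}, which is exactly your computation of the invariants and their gluing (write $\varphi^{-1}\partial_a\varphi$ rather than $\partial_a\log\varphi$, since $\varphi$ may be negative). Your identification of the fundamental vector field with the tangent lift $t\,\partial_t+\dot{t}\,\partial_{\dot t}$ is also consistent with the paper's subsequent discussion of weight-zero vector fields and the Atiyah bundle $(\sT P)_0$.
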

We can use adapted coordinates on $\sT P$ $(x^a, t , \dot{\mathbf{t}}, \dot{x}^b)$, where $\dot{\mathbf{t}} :=  t^{-1} \dot{t}$. The $\R^\times$-action is then
$$(\sT\rmh_s)^*(x^a, t , \dot{\mathbf{t}}, \dot{x}^b) = (x^a, st , \dot{\mathbf{t}}, \dot{x}^b)\,.$$
 The admissible coordinate transformations are 
 \begin{align}\label{eqn:AdmCooTrasTP}
 & x^{a'} = x^{a'}(x), && t' = \phi(x)t,\\ \nonumber
 & \dot{\mathbf{t}}' = \dot{\mathbf{t}} + \phi^{-1}(x)\dot{x}^b\frac{\partial \phi}{\partial x^b}(x), && \dot{x}^{b'} =  \dot{x}^a\frac{\partial x^{b'}}{\partial x^a}\,.
 \end{align}
 We have several fibre bundle structures here. We set $(\sT P)_0 :=  \sT P\slash \R^\times$, which is the Atiyah bundle of $P$. The Atiyah bundle comes with adapted coordinates   $(x^a,\dot{\mathbf{t}}, \dot{x}^b)$, and we can identify its sections with $\R^\times$-invariant vector fields on $P$; in other words, vector fields of weight zero.  
\begin{center}
\leavevmode
\begin{xy}
(0,20)*+{\sT P}="a"; (20,20)*+{P}="b";%
(0,0)*+{(\sT P)_0}="c";%
(20,0)*+{M}="d";%
{\ar "a";"b"}?*!/_3mm/{ };%
{\ar "b";"d"}?*!/_4mm/{};{\ar "c";"d"}?*!/^4mm/{};%
{\ar "a";"c"}?*!/_4mm/{};%
\end{xy}
\end{center}
We also have $\sT P \stackrel{\sT \pi}{\longrightarrow} \sT M$ given (symbolically) by $(x^a, t , \dot{\mathbf{t}}, \dot{x}^b) \mapsto (x^a, \dot{x}^b)$. The vertical bundle $\sV P := \ker(\rmd \pi)$, is locally given by $\dot{x}^b =0$, and so comes with adapted coordinates $(x^a, t , \dot{\mathbf{t}})$. Note that the admissible changes of coordinates are 
$$x^{a'} = x^{a'}(x)\,, \qquad t' = \phi(x)t\,, \qquad  \dot{\mathbf{t}}' = \dot{\mathbf{t}}\,.$$
Thus, via this choice of coordinates, we see that $\sV P  \simeq P \times \R$. The vertical bundle being trivial follows from our earlier observation that the vertical vector fields admit a global basis. Thus, we have the identification 
$$\Sec(\sV P)\simeq \Span_{C^\infty(P)}\left \{\Delta_P\right \}\,.$$
Of course, the triviality of the vertical bundle of a principal bundle is well-known.  \par 
Given a vector field $X\in \Vect(P)$, we define the \emph{vertical lift} $i_X$ as the vector field on $\sT P$, locally given by 
\begin{equation}\label{eqn:VertLift}
 X = X^a(x,t)\frac{\partial}{\partial x^a} + X^t(x,t) t\frac{\partial}{\partial t} \longmapsto i_X := X^a(x,t)\frac{\partial}{\partial \dot{x}^a} + X^t(x,t) \frac{\partial}{\partial \dot{\mathbf{t}}} \in \Vect(\sT P)\,.
\end{equation}
The \emph{tangent lift} of a vector field $X \in \Vect(P)$ is the vector field $\mathcal{L}_X \in \Vect(\sT P)$ locally given by  
\begin{equation}\label{eqn:LieDer}
\mathcal{L}_X := X^a\frac{\partial}{\partial x^a} + X^t t\frac{\partial}{\partial t} + \left(\dot{x}^b \frac{\partial X^a}{\partial x^b} + \dot{\mathbf{t}} \,t  \frac{\partial X^a}{\partial t} \right)\frac{\partial}{\partial \dot{x}^a} + \left(\dot{x}^b \frac{\partial X^t}{\partial x^b} + \dot{\mathbf{t}} \left(X^t + t \frac{\partial X^t}{\partial t} \right)\right)\frac{\partial}{\partial \dot{\mathbf{t}}}\,.
\end{equation}
By considering symmetric covariant tensors as functions on $\sT P$ that are locally monomials in ``velocity'', then the tangent lift is precisely the Lie derivative and hence our notation. \par
We will regularly consider the Lie derivative of the Euler vector field, which in adapted coordinates is 
\begin{equation}\label{eqn:LieDerEul}
\mathcal{L}_{\Delta_P} =  t\frac{\partial}{\partial t} + \dot{\mathbf{t}}\frac{\partial}{\partial \dot{\mathbf{t}}}\,.
\end{equation}
 Recall that the Lie algebra of $\R^\times = \mathrm{GL}(1,\R)$ is $\R$ equipped with the standard commutator. The fact that the Lie algebra is just $\R$ vastly simplifies the discussion of connections as compared with more general Lie groups. Importantly, we have a single fundamental vector field, and connection forms are real-valued.  
%
%%%%%%%%%%%%%%%%%%%%%%%%%%%
%
\subsection{Connections on $\R^\times$-bundles}
Our approach to connection is adapted from Kolar et al. \cite[Chapter III]{Kolar:1993}. As we are dealing with one-dimensional vertical bundles, the formalism simplifies. For completeness, we include the relevant details here.
\begin{definition}\label{def:RXConn} An $\R^\times$-connection on a principal $\R^\times$-bundle is a smooth bundle map $\Phi: \sT P \rightarrow \sT P$ over $P$, such that
\begin{enumerate}[i)]
\setlength\itemsep{1em}
\item $\Phi  \circ \Phi = \Phi$,
\item $\Image \Phi =  \sV P$,
\item $(\sT \rmh)_s \circ \Phi = \Phi \circ (\sT \rmh)_s$, for all $s\in \R^\times$.
\end{enumerate}
\end{definition}
As $\ker \Phi$ is of constant rank, it is a vector subbundle of $\sT P$, the \emph{horizontal bundle} and is denoted $\sH P$. By construction, we have
$$\sT P = \sH P\oplus \sV P\;,$$
and $\sT_p P = \sH_p P\oplus \sV_pP$, for all $p \in P$. Thus, given $\Phi$, we recover the classical description of a connection as a specification of a horizontal bundle complementary to the vertical bundle.\par 
We define the connection one-form via
\begin{equation}\label{eqn:ConOneForm}
\Phi(X) = \omega(X)\Delta_P\,,
\end{equation}
for all $X \in \Vect(P)$.  Note that the one-form $\omega$ is unique, and as the Lie algebra of $\R^\times$ is $\R$, we have a genuine one-form on $P$.  From the definition of $\Phi$, we have $\Phi(\Delta_P) = \Delta_P$, and so $\omega(\Delta_P) =  \mathbf{1}$, where $\mathbf{1}$ is the unit function in $C^\infty(P)$. Moreover, the connection one-form must be of weight zero so that $\Phi$ respects the weight. Informally, we view $\omega$ as a one-form `dual' to the Euler vector field. Clearly, $\ker \omega = \sH P$.   \par 
We should carefully check that \eqref{eqn:ConOneForm} does indeed produce a connection one-form.  First, we denote the linear map $\sigma : \R \rightarrow \Vect(P)$, given by $\sigma(r) = r \Delta_P$.  This map sends elements of the Lie algebra of $\R^\times$ to their fundamental vector fields. To be a connection one-form, we need the following to be satisfied
\begin{enumerate}[1.]
\item $\omega(\sigma(r)) = r\cdot \mathbf{1}$, for all $r \in \R$,\label{enu:1}
\item $(\sT \rmh)^*_s \omega = \omega$, for all $s\in \R^\times$.\label{enu:2}
\end{enumerate}
We observe that $\omega(\sigma(r)) =  \omega(r\Delta_P) = r \omega(\Delta_P) = r\cdot \mathbf{1}$, so \ref{enu:1} is satisfied. As $\omega$ is of weight zero, \ref{enu:2} holds.
\begin{definition}
The \emph{connection one-form} of an $\R^\times$-connection $\Phi$ on $P$, is the one-form $\omega$ defined by \eqref{eqn:ConOneForm}.
\end{definition}
In adapted coordinates, the connection one-form can be written as 
$$\omega = \dot{\mathbf{t}} + \dot{x}^a  A_a(x)\,, $$
where $A_a(x)$ is the $\R$-valued gauge field associated with the connection one-form. For the local expression to be invariant under coordinate changes \eqref{eqn:AdmCooTrasTP}, we require 
$$A'_ {a'} = \left(\frac{\partial x^a}{\partial x^{a'}}\right)A_a(x) - \left(\frac{\partial x^a}{\partial x^{a'}}\right) \phi^{-1}\frac{\partial \phi}{\partial x^a}\,.$$
The associated covariant derivative we write as $D_a := \partial_a + A_a(x)\Delta_P$, and this provides a local frame for $\Sec(\sH P)$. In particular, any vector field can be split as $X = X_\sH + X_\sV$, where locally $X_\sH = X^aD_a$ and $X_\sV =  X^t \Delta_P$.\par   
The next proposition is well-known from the general theory of principal connections.
\begin{proposition} \label{prop:RXConnExist}
Let $P$ be a principal $\R^\times$-bundle. Then $\R^\times$-connections on $P$ always exist.
\end{proposition}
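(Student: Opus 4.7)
The plan is to use a partition of unity argument, which is the standard strategy for proving the existence of connections on principal bundles. The simplification here is that the Lie algebra of $\R^\times$ is just $\R$, so I do not have to worry about $\mathrm{Ad}$-equivariance; the weight-zero condition on the connection one-form replaces it entirely.

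First I would cover $M$ by an atlas $\{U_\alpha\}$ over which $P$ is trivialised. On the trivial bundle $\pi_\alpha^{-1}(U_\alpha) \simeq U_\alpha \times \R^\times$ there is a canonical flat connection whose connection one-form, in the adapted coordinates $(x^a, t, \dot{\mathbf{t}}, \dot{x}^b)$ on $\sT P$, is simply $\omega_\alpha = \dot{\mathbf{t}}$ (i.e., the local gauge field $A_a$ vanishes identically). A direct check shows that $\omega_\alpha$ satisfies the two required axioms on its domain: evaluating on $\Delta_P$ amounts to reading off $\dot{\mathbf{t}} = 1$, giving $\omega_\alpha(\Delta_P) = \mathbf{1}$, while the transformation rule for $\dot{\mathbf{t}}$ under $\sT \rmh_s$ (which is trivial, as recorded in Section \ref{sec:Prelims}) makes $\omega_\alpha$ of weight zero.

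Next I would choose a partition of unity $\{\rho_\alpha\}$ on $M$ subordinate to $\{U_\alpha\}$ (which exists because $M$ is paracompact by our standing assumptions), pull it back to $P$ via $\pi$, and define globally
\begin{equation*}
\omega \;:=\; \sum_\alpha (\pi^*\rho_\alpha)\,\omega_\alpha.
\end{equation*}
Local finiteness of the partition of unity makes the sum well-defined and smooth. The property $\omega(\Delta_P) = \mathbf{1}$ is affine in the $\omega_\alpha$ and follows from $\sum_\alpha \pi^*\rho_\alpha = \mathbf{1}$ together with $\omega_\alpha(\Delta_P) = \mathbf{1}$. Weight zero is linear in $\omega$: each $\omega_\alpha$ is weight zero, and the coefficients $\pi^*\rho_\alpha$ are pullbacks of functions on $M$, hence $\Delta_P$-invariant, so $\mathcal{L}_{\Delta_P} \omega = 0$. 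Thus $\omega$ is a genuine connection one-form on $P$.

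Finally, I would recover the endomorphism $\Phi : \sT P \to \sT P$ by setting $\Phi(X) := \omega(X)\Delta_P$ as in \eqref{eqn:ConOneForm}. The three conditions of Definition \ref{def:RXConn} are then immediate: $\Phi \circ \Phi = \Phi$ follows from $\omega(\Delta_P) = \mathbf{1}$; $\Image \Phi = \sV P$ follows from $\sV P$ being spanned by $\Delta_P$; and the equivariance $(\sT\rmh)_s \circ \Phi = \Phi \circ (\sT\rmh)_s$ follows from the weight-zero property of $\omega$ together with the fact that $\Delta_P$ is $\R^\times$-invariant as a vector field. There is no real obstacle to this argument; the only point that deserves attention is verifying that the \emph{affine} combination defining $\omega$ preserves both axioms, which is precisely why one normalises using a partition of unity rather than arbitrary bump functions.
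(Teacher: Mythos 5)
Your proposal is correct and follows essentially the same route as the paper's sketch: local flat connections $\omega_\alpha = \dot{\mathbf{t}}_\alpha$ on trivialising opens, glued by a partition of unity pulled back from $M$, with the verification that the affine combination preserves $\omega(\Delta_P)=\mathbf{1}$ and weight zero. The extra detail you supply on checking the three axioms of Definition \ref{def:RXConn} is a faithful expansion of what the paper leaves implicit.
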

\begin{proof}[Sketch of Proof]
Pick a cover $\{U_i\}_{i\in \mathcal I}$ of $M$ that trivialises the principal bundle, i.e., $\pi^{-1}(U_i) \simeq U_i \times \R^\times$, and allows us to employ adapted local coordinates $(x^a_i , t_i)$ on $P$ and $(x^a_i , t_i, \dot{\mathbf{t}}_i, \dot{x}^a_i)$ on  $\sT P$. Over each open $U_i$, define $\Phi_i(X) = \dot{\mathbf{t}}_i(X)\Delta_P$, so  $\omega_i = \dot{\mathbf{t}}_i$. Choose a partition of unity $\sum_i \varphi_i = 1$, subordinate to the chosen cover; such partitions of unity always exist. We then define  $\omega = \sum_i \varphi_i \omega_i$. Note that each $\omega_i$ is a real-valued function and that $\Delta_P$ is globally defined. This simplifies the standard proof, in particular, (\ref{enu:1}. and  \ref{enu:2}.) are satisfied.  
\end{proof}
%
%%%%%%%%%%%%%%%%%%%%%%%%%
%
\section{Carrollian Geometry}\label{sec:CarrGeom}
\subsection{Carrollian $\R^\times$-bundles}\label{subsec:CarRBundles}
Our definition of a Carrollian structure is very similar to that given by Ciambelli et al. \cite{Ciambelli:2019} (a similar bundle approach also appears in \cite{Blitz:2024}), except that we will work with principal $\R^\times$-bundles rather than line bundles, or more generally, fibre bundles with typical fibre $\R$. 
\begin{definition}\label{def:CarrBun}
A principal $\R^\times$-bundle $P$ is said to be a \emph{Carrollian $\R^\times$-bundle} if it is equipped with a degenerate metric on the total space $g$ such that 
$$\ker(g) := \left \{X \in \Vect(P)  ~|~  g(X,-)=0 \right \} =  \Sec(\sV P)\,.$$
\end{definition}
We will denote a Carrollian $\R^\times$-bundle as a pair $(P, g)$, the Euler vector field is canonically derived from the principal $\R^\times$-bundle, and so we do not explicitly state it as part of the structure. Recall that the Euler vector field generates $ \Sec(\sV P)$. \\ 
\noindent  \textbf{Comments:} \
 \begin{enumerate}
 \item The bundle $P$ may be non-trivial, but the vertical bundle $\sV P$ is trivial.
 \item  The degenerate direction of the metric is along the $\R^\times$-fibres. 
 \item  The Euler vector field $\Delta_P$ describes how points $p \in P$ move under the action of $\R^\times$ on $P$. Infinitesimally, we have $(x^a, t \big) \mapsto (x^a , t + \epsilon \, t )$. 
 \item Vertical vector fields can globally be written as $X_\sV = X^t \Delta_P$, where $X^t \in C^\infty(P)$. They correspond to (infinitesimal) gauge transformations. In particular, weight $-1$ vertical vector fields are (locally) of the form $X_\sV = (X^t(x)t^{-1})\Delta_P =  X^t(x)\partial_t$, and thus generate the (infinitesimal) gauge transformations
$$(x^a, t) \mapsto \big(x^a, t + \epsilon\, X^t(x) \big)\,. $$
Such transformations are interpreted as ``supertranslations'' in the BMS context. Note that these (infinitesimal) gauge transformations are abelian, and so the Lie algebra is abelian; this is just the statement that the Lie bracket restricted to weight $-1$ vertical vector fields is trivial. 
 \end{enumerate}
We consider the degenerate metric as a function on $\sT P$ that is locally in adapted coordinates a quadratic monomial in the `velocities'.  We define $g(X,Y) = i_X i_Y g$, for vector fields $X,Y \in \Vect(P)$.
\begin{proposition}
Let $(P, g)$ be a Carrollian $\R^\times$-bundle. Then, in adapted local coordinates, the degenerate metric is of the form
$$g = \dot{x}^a \dot{x}^b g_{ba}(x,t)\,.$$
\end{proposition}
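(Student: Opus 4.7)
The plan is direct: write the most general symmetric $(0,2)$-tensor on $P$ in adapted coordinates and then use the kernel condition to kill off the unwanted components. No clever trick is needed; the whole content is that the adapted coordinate $\dot{\mathbf{t}} = t^{-1}\dot{t}$ makes the vertical lift of the Euler vector field into $\partial/\partial\dot{\mathbf{t}}$, so reading the kernel condition off in coordinates is immediate.

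First I would view $g$ as the function on $\sT P$ that is quadratic in the velocities, and write in adapted coordinates $(x^a,t,\dot{\mathbf{t}},\dot{x}^b)$ the most general symmetric $(0,2)$-tensor
$$g \;=\; g_{ab}(x,t)\,\dot{x}^a\dot{x}^b \;+\; 2\,h_a(x,t)\,\dot{x}^a\dot{\mathbf{t}} \;+\; h(x,t)\,\dot{\mathbf{t}}^2,$$
with $g_{ab}=g_{ba}$. The admissible changes of coordinates \eqref{eqn:AdmCooTrasTP} are tailored so that expressions of this form transform covariantly, so this \emph{is} the generic ansatz.

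Next I would impose the defining condition of Definition \ref{def:CarrBun}, namely $\Sec(\sV P)\subseteq \ker(g)$. Since $\Sec(\sV P)=\Span_{C^\infty(P)}\{\Delta_P\}$, it suffices to enforce $g(\Delta_P,Y)=0$ for all $Y\in\Vect(P)$. Writing $\Delta_P = t\partial_t$ in the decomposition $X^a\partial_a+X^t\,t\partial_t$ used in \eqref{eqn:VertLift} gives $X^a=0$, $X^t=1$, hence $i_{\Delta_P} = \partial/\partial\dot{\mathbf{t}}$. A one-line computation yields
$$i_{\Delta_P}g \;=\; 2\,h_a(x,t)\,\dot{x}^a \;+\; 2\,h(x,t)\,\dot{\mathbf{t}}.$$
The further contraction $g(\Delta_P,Y)=i_Y i_{\Delta_P}g$ must vanish for every choice of $Y$; since the components of $Y$ are arbitrary functions on $P$, this forces $h_a\equiv 0$ and $h\equiv 0$, leaving $g = g_{ba}(x,t)\,\dot{x}^a\dot{x}^b$, as claimed.

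The main step, such as it is, is recognising that in the $(x^a,t,\dot{\mathbf{t}},\dot{x}^b)$ chart the Euler vector field lifts to $\partial/\partial\dot{\mathbf{t}}$ rather than to something involving $t$; in the naive $(\dot{x}^a,\dot{t})$ chart one would pick up a factor of $t$ and would have to invoke $t\neq 0$ (which holds on $P=L^\times$) to conclude. In the adapted chart this subtlety is absorbed into the coordinate choice, and there is no genuine obstacle.
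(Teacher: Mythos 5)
Your proof is correct and follows essentially the same route as the paper's: write the general quadratic ansatz in the adapted chart, contract with $\Delta_P$, and conclude that the cross and fibre components vanish. The only cosmetic difference is that you make explicit the observation $i_{\Delta_P}=\partial/\partial\dot{\mathbf{t}}$ and the remark about avoiding the factor of $t$ in the naive chart, which the paper leaves implicit.
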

\begin{proof}
It is sufficient to consider $g(\Delta_P,-):= i_ {\Delta_P}g=0$ as by definition $\ker(g)$ is identified with $\Sec(\sV P)$.  In adapted coordinates, an arbitrary degenerate metric on $P$ is of the form
$$g =  \dot{x}^a \dot{x}^b g_{ba}(x,t) + 2 \dot{\mathbf{t}} \dot{x}^ag_a(x,t)+ \dot{\mathbf{t}}^2 g(x,t)\,.$$
As in adapted coordinates $\Delta_P = t \partial_t$, a quick calculations show
$$g(\Delta_P,  - ) = 2 \dot{x}^a g_{a}(x,t) + 2 \dot{\mathbf{t}} g(x,t)\,.$$  
Thus, for this to identically vanish for all values of ``velocity'', $g_a(x,t) =0$ and $g(x,t)=0$. It is observed that the  form of local expression for the degenerate metric does not change under admissible coordinate transformations \eqref{eqn:AdmCooTrasTP}. 
\end{proof}
If $\Delta_P$ is Killing, i.e., $\mathcal{L}_{\Delta_P}g=0$ (see \eqref{eqn:LieDerEul}), then the degenerate metric is weight zero, which means, locally, that the degenerate metric is independent of the fibre coordinate $t$.  This condition is equivalent to saying the degenerate metric is $\R^\times$-invariant (the action here is $(\sT \rmh_s)^* : C^\infty(\sT P) \rightarrow C^\infty(\sT P)$).  If the metric is of weight $k$, i.e., $\mathcal{L}_{\Delta_P} g = k \, g$, then the metric must depend on the fibre coordinate and the degenerate metric scales under $t \mapsto \lambda \,t$ as $g \mapsto \lambda^k\, g$ for any strictly positive $\lambda$. In general, a degenerate metric need not be homogeneous and show scaling behaviour. We will show that a weight-zero degenerate metric induces a non-degenerate metric on $M$.  \par 
We can think of $g$ as a block matrix (in hopefully clear notation)
$$g = \begin{pmatrix}g_M & 0 \\ 0 & 0\end{pmatrix}\,,$$
where $g_M$ is non-degenerate (i.e., as a matrix $\det(g_M)\neq 0$) and viewed as a generalised metric on $M$: locally $g_M$ can, in general, depend on the coordinate $t$, and so is not a genuine metric on $M$.  
\begin{proposition}\label{prop:KillingEulerMetric}
Let $(P,g)$ be a Carrollian $\R^\times$-bundle such that the Euler vector field is Killing, i.e., $\mathcal{L}_{\Delta_P}g =0$. Then the induced metric $g_M$ on $M$ is non-degenerate.
\end{proposition}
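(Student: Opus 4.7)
The plan is to first use the Killing condition to show that $g_M$ is a genuine tensor on $M$, and then to derive its non-degeneracy pointwise from the kernel condition of Definition \ref{def:CarrBun}.

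For the descent step I would start from the local form $g = g_{ab}(x,t)\dot{x}^a\dot{x}^b$ given by the previous proposition. Using \eqref{eqn:LieDerEul}, the hypothesis $\mathcal{L}_{\Delta_P} g = 0$ reduces to $t\,\partial_t g_{ab}(x,t) = 0$, so $g_{ab}$ is independent of $t$. Under the admissible coordinate transformations \eqref{eqn:AdmCooTrasTP}, the fibre rescaling $t' = \phi(x) t$ enters only through the $\dot{\mathbf{t}}$-components of the transformation law, which play no role here since $g$ has no $\dot{\mathbf{t}}$-components. Hence the local coefficients $g_{ab}(x)$ will patch together into a well-defined smooth symmetric $(0,2)$-tensor $g_M$ on $M$, consistent with the block decomposition already introduced.

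For non-degeneracy, I would fix $x \in M$ and a vector $v \in T_xM$ with $g_M(v,-) = 0$, aiming to conclude $v = 0$. Pick any $p \in P$ with $\pi(p) = x$ and any lift $\tilde v \in T_pP$ of $v$ under $\rmd\pi$; in adapted coordinates one may take $\tilde v = v^a \partial_{x^a}|_p$. For any $\tilde w \in T_pP$ one computes
\[
g_p(\tilde v, \tilde w) = g_{ab}(x)\, v^a \tilde w^b = g_M\bigl(v, \rmd\pi(\tilde w)\bigr) = 0,
\]
since $g$ pairs only the $\dot{x}$-components. Thus $\tilde v$ lies in the pointwise kernel of $g_p$. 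Definition \ref{def:CarrBun}, together with the constant signature $(1, \ldots, 1, 0)$ of the Carrollian metric, forces this pointwise kernel to coincide with $\sV_pP$, so $\tilde v$ must be vertical and hence $v = \rmd\pi(\tilde v) = 0$.

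The main point requiring care is the passage from the module-theoretic kernel condition $\ker g = \Sec(\sV P)$ of Definition \ref{def:CarrBun} to the pointwise statement $\ker g_p = \sV_pP$: this uses the implicit constant-rank hypothesis of the Carrollian setting, since a non-vertical vector in $\ker g_p$ at some $p$ would, by smoothness and constancy of signature, extend to a smooth non-vertical vector field annihilating $g$, contradicting the definition. Beyond this, the argument is a routine coordinate check.
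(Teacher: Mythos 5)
Your proof follows the same route as the paper's --- the Killing condition removes the $t$-dependence of $g_{ab}$ so that $g$ descends to a symmetric tensor $g_M$ on $M$, and non-degeneracy is then read off from the kernel condition of Definition \ref{def:CarrBun} --- so the short verdict is: correct, same approach. The one genuine difference is worth recording. The paper argues at the level of vector fields: it lifts a nonzero $X_M\in\Vect(M)$ to some $X\in\Vect(P)$, notes $X\notin\ker(g)$, and concludes that $g_M(X_M,-)\neq 0$; read literally, this only says the one-form $g_M(X_M,-)$ is not identically zero, which is weaker than pointwise non-degeneracy. You argue pointwise and correctly isolate the step this requires, namely passing from the module identity $\ker(g)=\Sec(\sV P)$ to the fibrewise identity $\ker(g_p)=\sV_pP$. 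That passage is not free: for example, $g=x^2\,\dot{x}^2$ on $\R\times\R^\times$ satisfies $\ker(g)=\Sec(\sV P)$ as modules (if $x^2X^x\equiv 0$ then $X^x\equiv 0$ by continuity) and has $\Delta_P$ Killing, yet $g_M=x^2\,\rmd x^2$ degenerates at $x=0$. So the constant-rank/fixed-signature hypothesis you invoke is genuinely needed --- it is implicit in the paper's block-matrix discussion with $\det(g_M)\neq 0$ just before the proposition, though not stated in Definition \ref{def:CarrBun} itself --- and your bump-function extension of a local non-vertical section of the pointwise kernel is the right way to reach the contradiction once that hypothesis is in place. In short, your treatment of the non-degeneracy step is a more careful version of the paper's argument rather than a different one.
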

\begin{proof}
In adapted coordinates, the weight-zero degenerate metric has the form
$$g = \dot{x}^a \dot{x}^b g_{ba}(x)\,,$$
i.e., no dependence on the coordinate $t$.  Due to the admissible coordinate changes, $g$ is viewed as a basic function of the projection $\sT \pi : \sT P \rightarrow \sT M$. Thus, we naturally view the degenerate metric as a function on $\sT M$, and for distinction we denote this as $g_M$. Consider a vector field $X_M\in \Vect(M)$ and any vector field $X\in \Vect(P)$ that projects to $X$. Locally, we observe that $X = X^a\partial_a + X^t\Delta_P$, with $X_M = X^a\partial_a$. By definition, $\ker(g) = \Sec(\sV P)$, which means that the kernel only contains vertical vector fields, implying that $X$ is not in $\ker(g)$.  Then $g_M(X_m, - ) \neq 0$ (other than $X_M =0$). Thus, $g_M$ is non-degenerate.
\end{proof}
\begin{proposition} \label{prop:Killing}
Let $(P,g)$ be a Carrollian $\R^\times$-bundle such that the Euler vector field is Killing, i.e., $\mathcal{L}_{\Delta_P}g =0$. Then all vector fields in $\Sec(\sV P)$ are Killing.
\end{proposition}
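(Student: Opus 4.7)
The plan is to exploit the fact, noted just before Definition \ref{def:CarrBun}, that vertical vector fields on $P$ admit a global basis given by the Euler vector field. That is, every $X_\sV \in \Sec(\sV P)$ can be written as $X_\sV = f\Delta_P$ for some $f\in C^\infty(P)$. The goal is therefore reduced to showing $\mathcal{L}_{f\Delta_P} g = 0$ for all such $f$.

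The key identity is the Leibniz-type formula for the Lie derivative of a symmetric covariant $2$-tensor along a rescaled vector field:
\begin{equation*}
(\mathcal{L}_{fX} g)(Y,Z) = f\,(\mathcal{L}_X g)(Y,Z) + (Yf)\, g(X,Z) + (Zf)\, g(X,Y)\,,
\end{equation*}
which follows by expanding $[fX, Y] = f[X,Y] - (Yf)X$ (and similarly for $Z$) in the standard definition of the Lie derivative. I would include this one-line derivation for completeness.

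I then apply this with $X = \Delta_P$. By hypothesis $\mathcal{L}_{\Delta_P}g = 0$, so the first term on the right-hand side vanishes. Moreover, since $\Delta_P \in \Sec(\sV P)$ and $\Sec(\sV P) = \ker(g)$ by Definition \ref{def:CarrBun}, we have $g(\Delta_P, Z) = g(\Delta_P, Y) = 0$ for all $Y, Z \in \Vect(P)$, killing the remaining two terms. Hence $(\mathcal{L}_{f\Delta_P} g)(Y,Z) = 0$ for all $Y, Z$, i.e.\ $f\Delta_P$ is Killing.

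No genuine obstacle is expected here: the argument is purely algebraic once one observes that $\Delta_P$ lies simultaneously in the kernel of $g$ and in the kernel of $\mathcal{L}_{(\cdot)} g$. The content of the proposition is really the compatibility between the triviality of $\sV P$ (yielding the global factorisation $X_\sV = f\Delta_P$) and the degeneracy condition that makes the $C^\infty(P)$-linear correction terms in the Leibniz rule drop out.
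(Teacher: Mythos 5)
Your proof is correct, and the underlying mechanism is the same as the paper's: write $X_\sV = f\Delta_P$ using the global triviality of $\sV P$, and observe that the terms generated by the non-constancy of $f$ vanish. The difference is purely one of presentation. The paper works in adapted coordinates, treating $g$ as a function on $\sT P$ and computing the tangent lift $\mathcal{L}_{X_\sV}$ explicitly; the correction terms there are proportional to $\partial/\partial\dot{\mathbf{t}}$ and die because $g$ has no $\dot{\mathbf{t}}$-dependence. You instead invoke the invariant Leibniz identity $(\mathcal{L}_{fX}g)(Y,Z) = f(\mathcal{L}_Xg)(Y,Z) + (Yf)g(X,Z) + (Zf)g(X,Y)$ and kill the correction terms via $\Delta_P \in \ker(g)$. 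These are the same fact in two languages ("no $\dot{\mathbf{t}}$-dependence" is precisely $g(\Delta_P,-)=0$), but your coordinate-free version makes the logical dependence on the Carrollian kernel condition more transparent and avoids any appeal to the local form of the tangent lift, so it is arguably the cleaner writeup. No gap.
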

\begin{proof}
As $\sV P$ is trivial, all vertical vector fields can be written as $X_\sV = X^t \Delta_P$, with  $X^t \in C^\infty(P)$. Using \eqref{eqn:LieDer}, we locally have
$$\mathcal{L}_{X_\sV} = X^t \Delta_P + \left( \dot{x}^a \frac{\partial X^t}{\partial x^a} + \dot{\mathbf{t}}(X^t +  \Delta_P X^t)\right)\frac{\partial}{\partial \dot{\mathbf{t}}}\,.$$ 
Then as $g$ has no dependence on the coordinate $\dot{\mathbf{t}}$, we observe that globally $\mathcal{L}_{X_\sV}g = X^t\mathcal{L}_{\Delta_P}g$. Thus, if $\mathcal{L}_{\Delta_P}g =0$, then $\mathcal{L}_{X_\sV}g=0$.
\end{proof}
In other words, if the Euler vector field is Killing, then $\ker(g) = \Sec(\sV P)$ consists of Killing vector fields. This implies that the Lie algebra of vertical Killing vector fields is infinite-dimensional. We view these vertical vector fields as generating trivial symmetries; they are due to the degeneracy rather than genuine symmetries.  The result that rescaling a Killing vector field by any (non-constant) function is again a Killing vector field is not true in standard (pseudo-)Riemannian geometry - the Lie algebras of Killing vector fields are finite. We will further examine Killing vector fields in the next subsection by using a connection to separate the horizontal and vertical vector fields (see Proposition \ref{prop:KillAreProj}). 
\begin{example}
Consider the trivial $\R^\times$-bundle $P = \R^n\times \R^\times$, which we equip with global coordinates $(x^a, t)$. We can give $P$ the structure of a Carrollian $\R^\times$-bundle using the standard Euclidean metric on $\R^n$, i.e., $g = \dot{x}^a \dot{x}^b\delta_{ba}$ is a degenerate metric on $\R^n\times \R^\times$. The Euler vector field is (globally) $\Delta_P = t \partial_t$, and clearly $\mathcal{L}_{\Delta_P}g=0$ as $g$ has no dependence on $t$.
\end{example}
\begin{example}\label{exp:RiemMan}
More general than the previous example, consider a (pseudo)-Riemannian $(M,g_M)$. We then consider $\pi: M \times \R^\times\rightarrow M$ equipped with the pullback metric $g := (\sT \pi^*)g_M$.  Clearly,  $\mathcal{L}_{\Delta_P}g=0$ as $g$ does not depend on $t$.  As a specific example, we have $P = S^1 \times \R^\times $, where we have equipped $S^1$ with the circle metric $g_{S^1}$.
\end{example}
\begin{example}
We define an equivalence relation on $S^1\times \R^\times$ as $(p, r) \sim (p +2 \pi, -r)$. The M\"obius $\R^\times$-bundle $\pi : P\rightarrow S^1$ is defined as $P := (S^1\times \R^\times)\slash \sim$. The projection is the projection onto the first factor. We can equip $\tau : S^1\times \R^\times\rightarrow S^1$ with a degenerate metric $g$, by defining $g :=  (\sT \tau)^*g_{S^1}$, where we have the circle metric $g_{S^1}$ on $S^1$. This degenerate metric induces a degenerate metric on $P$ using the equivalence relation; as the degenerate metric on $S^1\times \R^\times$ does not depend on points in $\R^\times$, the metric on $P$ is well-defined. Similarly, it is clear that $g(\Delta_P,-) =0$, and so we have a Carrollian $\R^\times$-bundle with a non-trivial bundle structure.
\end{example}
\begin{example}\label{exm:RiemGeo}
Consider any principal $\R^\times$-bundle $\pi : P \rightarrow M$ over a Riemannian manifold $(M, g_M)$.  Then $g := (\sT \pi)^* g_M$ is a degenerate metric on $P$. To check that we have a Carrollian structure, notice that $g$ has no components in the fibre direction. As $g_M$ is non-degenerate, $g(X, -) = 0$ if and only if $X \in \Sec(\sV P)$. Thus, we have a Carrollian $\R^\times$-bundle. Moreover, $\mathcal{L}_{\Delta_P}g = 0$ as $g$ is independent of the fibre direction.
\end{example}
\begin{definition}
Let $(P,g)$ and $(P',g')$ be Carrollian $\R^\times$-bundles. An \emph{isomorphism of Carrollian $\R^\times$-bundles} is a bundle isomorphism $\phi : P \rightarrow P'$ (over $\tilde{\phi}$), i.e., a diffeomorphism that satisfies $\pi' \circ \phi =  \tilde{\phi} \circ \pi$, such that 
\begin{enumerate}[1.]
\item it is $\R^\times$-equivariant, i.e., $\rmh_s(\phi(p)) = \phi(\rmh_s(p))$, for all $s\in \R^\times$ and $p\in P $, and
\item  it is an isometry, i.e., $(\sT\phi)^*g' = g$.
\end{enumerate}
\end{definition}
The $\R^\times$-equivariant condition can also be expressed as 
\begin{equation}
\Delta_P \circ \phi^* =  \phi^* \circ \Delta_{P'}\,.
\end{equation} 
%
%%%%%%%%%%%%%%%%%%%%%%%%%%%
%
\subsection{Carrollian Manifolds as Carrollian $\R ^\times$-bundles}\label{subsec:CarManRBundles}
We will take a quite general definition of a Carrollian manifold. First, we recall the standard definition of a weak Carrollian manifold, due to Duval et al. (see \cite{Duval:2014}). Similar structures have been referred to as null manifolds by Mars \cite{Mars:2024}. For material on foliations, the reader may consult \cite[Chapter 1]{Moerdijk:2003}.
\begin{definition}[Duval et al. \cite{Duval:2014}]
A \emph{weak Carrollian manifold} is a triple $(N, h, \zx)$, where $N$ is a smooth manifold, $h$ is a degenerate metric, $\zx$ is a nowhere vanishing complete vector field, called the \emph{Carroll vector field}, such that $\ker(h) = \Span\{\zx\}$.
\end{definition}
Note that, as it stands, while the above definition ensures that the associated foliation is regular, i.e., all the leaves are one-dimensional, it does not imply a simple foliation. We will change focus from the Carroll vector field to the distribution it defines and allow more general behaviour that captures the possibility of a Carroll vector field being singular and not complete.
\begin{definition}\label{def:CarrMan}
A \emph{Carrollian manifold} is a pair $(N, h)$, where $N$ is a smooth manifold equipped with a degenerate metric $h$, whose kernel is a possibly singular distribution with leaves of maximal dimension one. If the associated foliation of $N$ is regular, simple and with non-compact leaves, then we will refer to a \emph{simple Carrollian manifold}.
\end{definition}
A simple foliation is a foliation by leaves of a surjective submersion. For the case at hand, a simple Carrollian manifold is foliated by leaves diffeomorphic to $\R$. The coordinates on the leaves are interpreted as time, implying that simple Carrollian manifolds must be non-compact manifolds. We point out that simple Carrollian manifolds are, once a basis for the distribution has been chosen,  examples of weak Carrollian manifolds where the Carroll vector field is non-singular and complete, and the associated foliation is simple and has non-compact leaves.  Typical examples in the literature have a trivial bundle structure, i.e., $N = M \times \R$. However, we allow for non-trivial bundle structures. 
\begin{remark}
The definition we give of a Carrollian manifold is very general in that, by itself, there is no reason why the associated foliation must be regular and simple, there could be leaves of varying topological types and dimension e.g., mixtures of leaves diffeomorphic to $\R$ and $S^1$, and possibly single points where the dimension drops to zero. Even if the foliation is simple, the leaves could be diffeomorphic to $S^1$; we would then be discussing circle bundles. However, as Carrollian manifolds in physics arise as the ultra-relativistic limit of a spacetime, understood as globally hyperbolic and time-orientable Lorentzian manifolds, they possess a non-compact time dimension. Thus, a large class of physically interesting Carrollian manifolds will be simple Carrollian manifolds. 
\end{remark}
\begin{coexample}
Consider $\R^2\setminus \{(0,0)\}$ which we equip with standard coordinates $(x,y)$, and the degenerate metric $h = (\dot{x}\, x + \dot{y}\, y)$. Via inspection, one can quickly see that the non-vanishing vector field  $\zx = -y \partial_x + x \partial_y$ generates the rank-one kernel. However, the integral curves are concentric circles about the removed point $(0,0)$, and so this is not an example of a simple Carrollian manifold.  Similarly, considering $\R^2$, the vector field $\zx$ vanishes, as does the degenerate metric, at $(0,0)$, and the kernel becomes rank-two. 
\end{coexample}
Given a simple  Carrollian manifold $(N,h)$, we denote the line bundle defined by the kernel of the degenerate metric as $\tau : L \rightarrow N$. Thus, the tangent bundle splits as 
$$\sT N \simeq L\oplus \sT N\setminus L =: L \oplus E\,,$$
where the fibres of $E$ are $E_n = \sT_n N\setminus L_n$, for all points $n \in N$. Clearly, $\mathsf{rank}(E) = \mathsf{dim} N -1$.   For a simple Carrollian manifold, there are codimension-one embedded submanifolds of $N$ that are transversal to the foliation, i.e., the tangent bundle of the submanifold does not contain elements from the line bundle defining the foliation. We will refer to such submanifolds as \emph{transversal slices}. An important property of transversal slices is that they intersect each leaf at a single point. For non-simple Carrollian manifolds, such submanifolds can only be locally defined. 
\begin{example}
Consider $\R^2$ with the simple foliation with leaves given by $\mathsf{L}_c = \{(x,y)\in \R^2 ~~~|~~ y = c\}$, for any constant $c =\R$. Clearly, the leaves are all diffeomorphic to $\R$. An example of a transversal slice is the $y$-axis, i.e., $ \mathsf{S} =\{(x,y)\in \R^2 ~~|~~ x = 0\}\subset \R^2$.
\end{example}
\begin{example}\label{exp:TransSlice}
Let $N = M \times \R$, where $M$ is any smooth manifold. Consider the simple foliation of $N$ with leaves given by $\mathsf{L}_m = \{m\}\times \R$, where $m \in M$. Clearly, the leaves are all diffeomorphic to $\R$. An example of a transversal slice is any copy of $M$ in $N$, e.g., $\mathsf{S} = M \times \{0\}$. However, other valid choices of a transversal slice are $\mathsf{S}_r := M \times \{r\}$, for any $r \in \R$. Even more generally, any transversal slice can be defined as the graph of a smooth function $f : M \rightarrow \R$ and setting $\mathsf{S}_f := \{\big(m, f(m)\big) \in M \times \R ~~|~~ m \in M\}$.
\end{example}
\begin{theorem}\label{thrm:CarrManAreCarrBun}
For any simple Carrollian manifold $(N,h)$, there exists a Carrollian $\R^\times$-bundle $(P, g)$ that can be non-canonically constructed from $(N,h)$.
\end{theorem}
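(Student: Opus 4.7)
The plan is to extract a fibre bundle structure from $(N,h)$, pick a section, linearise, and then restrict the metric. Because $(N,h)$ is simple Carrollian, $K := \ker(h) \subset \sT N$ is a rank-one integrable sub-bundle whose foliation $\mathcal{F}$ is simple with leaves diffeomorphic to $\R$. Hence the projection $q : N \to M := N/\mathcal{F}$ onto the leaf space is a surjective submersion, and contractibility of the fibre upgrades it to a smooth fibre bundle over $M$ with typical fibre $\R$.

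First, I would select a transversal slice, i.e.\ a global smooth section $s: M \to N$ of $q$; such a section exists by a partition-of-unity argument on $M$ combined with the contractibility of the fibres, and this is the first non-canonical choice. Second, I would linearise $N$: set $L := s^*K$, a line bundle over $M$, and construct a fibre-preserving diffeomorphism $\Psi: L \stackrel{\sim}{\longrightarrow} N$ sending the zero section of $L$ onto $s(M)$. Over a trivialising open cover of $L$, pick a nonvanishing local generator $\zx$ of $K$, rescale it to be complete along each leaf, and declare $\Psi(r \cdot \zx_{s(m)}) := \phi^\zx_r(s(m))$, where $\phi^\zx$ is the flow. Completeness after rescaling is available because each leaf is diffeomorphic to $\R$, and the local pieces glue into a global diffeomorphism because the $\R^\times$-valued transition cocycle of $L$ governs the rescaling ambiguity; this is the second non-canonical choice.

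Third, set $P := L \setminus \{0_M\}$, endowed with its canonical principal $\R^\times$-bundle structure and Euler vector field $\Delta_P$. The composition $\iota : P \hookrightarrow L \stackrel{\Psi}{\longrightarrow} N$ is a smooth embedding with image the open submanifold $N \setminus s(M)$. Define $g := \iota^* h$ as a symmetric $(0,2)$-tensor on $P$. Since $\iota$ is fibre-preserving and $\iota_* \Delta_P$ is, by construction, a nonvanishing section of $K$ along $N\setminus s(M)$, the kernel of $g$ consists precisely of the vertical vector fields of $P$; that is, $\ker(g) = \Sec(\sV P)$, verifying Definition \ref{def:CarrBun}, so $(P,g)$ is a Carrollian $\R^\times$-bundle.

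The main obstacle I anticipate is the middle step: producing the global linearising diffeomorphism $\Psi$. Locally the flow construction is routine, but globally one must coordinate the fibrewise choices with the section $s$ and ensure the generator can be rescaled to be complete so that whole copies of $\R$ (the fibres of $L$) exhaust whole leaves of $\mathcal{F}$. Completeness can always be arranged by a smooth choice of rescaling in families (any nonvanishing vector field on a manifold diffeomorphic to $\R$ is conjugate to a complete one), and the patching is controlled by the $\R^\times$-valued cocycle of the line bundle $L$, so both issues are technical rather than conceptual.
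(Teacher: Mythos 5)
Your overall architecture (pass to the leaf space $M=N/\mathcal{F}$, choose a transversal slice/section, linearise to a line bundle $L$, delete the zero section, pull back $h$, and check that the kernel is the vertical bundle) is the same as the paper's; the genuine divergence is in how the linearising diffeomorphism is produced. The paper (Lemma \ref{lem:LinFibBun}) shifts the trivialisations of $N\to M$ by the section so that the transition maps fix $0$, Taylor-expands them, and takes the first-order terms --- which automatically satisfy the cocycle condition --- as the transition functions of $L$. You instead try to build $\Psi : L=s^*K \to N$ by flowing along local generators of $K$. As written, this step has a gap. If $\zx_j = f_{ij}\,\zx_i$ on an overlap with $f_{ij}$ \emph{not} constant along the leaves, then rescaling a generator reparametrises its integral curves nonlinearly, so $\phi^{\zx_j}_{r_j}(s(m)) \neq \phi^{\zx_i}_{r_i}(s(m))$ in general even when $r_i\zx_i|_{s(m)} = r_j\zx_j|_{s(m)}$. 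The transition between your local flow charts is therefore a cocycle valued in $\mathrm{Diff}(\R,0)$, not in $\mathrm{GL}(1,\R)$, and the claim that ``the patching is controlled by the $\R^\times$-valued cocycle of $L$'' is not correct; the ``rescale to be complete along each leaf'' step, performed chart by chart, makes matters worse by destroying whatever compatibility the raw generators had. What your construction produces without further input is exactly the nonlinear fibre bundle the paper starts from, and the linearisation still has to be done.

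The gap is repairable, and then your route becomes a legitimate alternative to the paper's Taylor-expansion lemma: use the canonical isomorphism $K \cong q^*L$ coming from the fibrewise retraction onto $s(M)$ to choose local generators $\zx_i$ whose transition functions $f_{ij}$ are \emph{basic} (constant along leaves), and perform a single \emph{global} completeness rescaling (e.g.\ normalise the length of $\zx$ against a complete Riemannian metric on $N$; leafwise-constant rescalings preserve completeness since $\phi^{c\zx}_r = \phi^{\zx}_{cr}$ for constants $c$). With leafwise-constant transitions the identity $\phi^{\zx_j}_{r_j} = \phi^{\zx_i}_{f_{ij}(m) r_j} = \phi^{\zx_i}_{r_i}$ does hold and the local maps glue. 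Note this introduces a further non-canonical choice (the trivialising data for $K\cong q^*L$ and the rescaling) beyond the slice and the section, but that is consistent with the statement being proved. Once $\Psi$ is in hand, your verification that $\ker(g)=\Sec(\sV P)$ via $\iota_*\Delta_P$ spanning $K$ is fine and is essentially the paper's argument.
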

Before we can prove the above theorem, we will need a lemma. Note that we will be dealing with fibre bundles with typical fibre $\R$, but not necessarily those whose transition maps are linear. We want a (possibly non-canonical) way of associating a line bundle with a more general bundle with typical fibre $\R$. The approach is to use a Taylor expansion of the transition maps (we will shortly be more precise), but in order to do this, we need to select a section of $E$ in order to make sense of setting the fibre coordinates to zero in a consistent way.  The choice of section introduces the non-canonical aspect of finding such line bundles. If we have a line bundle from the start, then we may simply choose the zero section, and the following lemma is trivial. 
\begin{lemma}\label{lem:LinFibBun}
Let $\pi: E \rightarrow M$  be a fibre bundle with typical fibre $\R$ over a paracompact and connected smooth manifold $M$. Then, for any chosen global section $s \in \Sec(E)$, there exists a unique (up to isomorphism) line bundle $L_s(E)\rightarrow M$, and a fibre-preserving diffeomorphism $\Phi_s : L_s(E)\stackrel{\sim}{\rightarrow} E$.
\end{lemma}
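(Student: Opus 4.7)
My plan is to define $L_s(E)$ intrinsically as the pullback to $M$ of the vertical bundle of $E$, and to realise the diffeomorphism $\Phi_s$ via a fibrewise exponential map for a suitably chosen complete fibrewise metric on $E$. The first step is to set
\[
L_s(E) := s^* \sV E, \qquad \sV E := \ker(\sT \pi) \subset \sT E,
\]
which is a line bundle over $M$ because $\sV E$ is of rank one (the fibres of $\pi$ being one-dimensional). Explicitly, the fibre of $L_s(E)$ over $m$ is canonically the tangent line $T_{s(m)}(E_m)$ to the fibre $E_m$ at the distinguished point $s(m)$.

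Next, I would equip $\sV E$ with a fibrewise Riemannian metric $g^\sV$ that restricts to a complete metric on every fibre $E_m \cong \R$. Paracompactness of $M$ lets me choose a locally finite trivialising cover $\{U_i\}$ and glue the local flat metrics $\xd t_i^2$ using a subordinate partition of unity on $M$, and then rescale by a positive function on $M$ sufficient to guarantee that every fibre has infinite length in both directions. Each $(E_m, g^\sV|_{E_m})$ is then a one-dimensional complete Riemannian manifold, so the exponential map $\exp_{s(m)} : T_{s(m)}(E_m) \to E_m$ is a global diffeomorphism. Setting
\[
\Phi_s(v) := \exp_{s(\pi_L(v))}(v), \qquad v \in L_s(E),
\]
with $\pi_L : L_s(E) \to M$ the projection, produces a smooth fibre-preserving map (smoothness coming from smooth dependence of geodesics on initial data and parameters) which is a diffeomorphism on each fibre, and hence a global fibre-preserving diffeomorphism over $\mathrm{id}_M$.

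For uniqueness, if $L' \to M$ is another line bundle and $\Phi': L' \to E$ a fibre-preserving diffeomorphism whose restriction to the zero section is $s$, then the fibrewise derivative of $\Phi'$ along the zero section is a vector bundle isomorphism $L' \stackrel{\sim}{\to} s^* \sV E = L_s(E)$, so $L_s(E)$ is unique up to line bundle isomorphism; only the explicit $\Phi_s$ depends on the choice of fibrewise metric. The main obstacle of this programme is the construction of the fibrewise metric $g^\sV$ that is complete on \emph{every} fibre: a naive convex combination of complete local metrics need not be complete, because pullbacks under nonlinear transition diffeomorphisms can compress a fibre's total length to a finite value. The rescaling step indicated above is the standard but delicate fix, and this is where the bulk of the technical work lies.
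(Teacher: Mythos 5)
Your proof is correct, but it takes a genuinely different route from the paper's. The paper works in section-shifted local trivialisations: it arranges the transition functions $\tilde{\psi}_{ij}$ so that they fix $r_j=0$, Taylor-expands about $r_j=0$, and observes that the linear terms $\tilde{\psi}'_{ij}(m,0)$ form a cocycle defining $L_s(E)$; the diffeomorphism $\Phi_s$ is then written down chart by chart. Your construction is the intrinsic counterpart: $s^*\sV E$ is precisely the line bundle with cocycle $\tilde{\psi}'_{ij}(m,0)$, so the two definitions of $L_s(E)$ agree, but you obtain $\Phi_s$ globally in one stroke via the fibrewise exponential map of a fibrewise complete metric, and your uniqueness argument (differentiating any competing fibre-preserving diffeomorphism along the zero section) is cleaner than the paper's appeal to ``a standard result in bundle theory''. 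What your approach costs is the Riemannian machinery (fibrewise metric, Hopf--Rinow, smooth dependence of geodesics); what it buys is a coordinate-free definition and a manifestly global $\Phi_s$. One remark on the step you flag as the main obstacle: the worry is unfounded, and your proposed fix would not address it anyway. The partition-of-unity metric $g^\sV=\sum_i\varphi_i\,\xd t_i^2$ restricted to a fibre $E_m$ dominates $\varphi_{i_0}(m)\,\xd t_{i_0}^2$ for any $i_0$ with $\varphi_{i_0}(m)>0$, and $t_{i_0}$ is a \emph{global} coordinate on $E_m\cong\R$, so $\xd t_{i_0}^2$ is complete there; a metric bounded below by a positive constant multiple of a complete metric is complete, hence no rescaling is needed. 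Conversely, rescaling by a positive function on $M$ alone is a \emph{constant} rescaling on each individual fibre and therefore could never restore completeness of a fibre if it were lacking. Neither point damages the proof, since the naive glued metric already works.
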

\begin{proof}
As $\R$ is contractible and we have taken $M$ to be paracompact and connected, $E$ admits global sections (using a partition of unity, we can extend local sections to global ones). Given local trivialisations 
$$\phi_i : \pi^{-1}(U_i) \stackrel{\sim}{\rightarrow} U_i \times \R\,, \qquad \phi_i(p)= (m, r)\,, $$
the associated transition functions we write as
$$\phi_{ij}(m, r) := (m, \psi_{ij}(m,r))\,.$$
Let us fix a global section $s\in \Sec(E)$. We define a shifted local trivialisation associated with $s$ by defining  
$$\tilde{\phi}_i(p) = (m, r- s_i(m)) := (m, r_i)\,.$$
where $s_i = s|_{U_i}$. We have shifted the global coordinate on $\R$ over $U_i$ using the section.  We set $r_i:= r -s_i(m)$, where $r$ is the original coordinate. The shifted transition functions $\tilde{\phi}_{ij} := \tilde{\phi}_i \circ \tilde{\phi}^{-1}_j$ are then
$$\tilde{\phi}_{ij}(m, r_j) = (m, \psi_{ij}(m, r_j + s_j(m))- s_i(m)) =: (m, \tilde{\psi}_{ij}(m,r_j)) \,.$$
Setting $r_j =0$, we observe that 
$$\tilde{\phi}_{ij}(m, 0) = (m, \psi_{ij}(m, s_j(m))- s_i(m)) = (m, 0) \,,$$
as $s_i = s|_{U_i}$ and $s_j = s|_{U_j}$ implies that $s_i(m) = \psi_{ij}(m,s_j(m))$. Thus, by construction, we have $\tilde{\phi}_{ij}(m, 0) = (m, 0)$. This is not, in general, the case for the original transition functions $\phi_{ij}$ as they are assumed to be non-linear. \par 
Taylor expanding $\tilde{\psi}_{ij}(m, r_j)$ about $r_j =0$, we obtain $\tilde{\psi}_{ij}(m, r_j) = 0 + \big( \tilde{\psi}'_{ij}(m, 0)\big)r_j + \cdots$. Importantly, the first-order term in the expansion satisfies the cocycle condition independently of higher-order terms. Thus, given a local trivialisation of $E$ and a chosen (global) section, we construct a line bundle over $M$ by defining the transition functions as 
$$\varphi_{ij}(m,t) := \left(m, \big( \tilde{\psi}'_{ij}(m, 0)\big)t \right)\,.$$
We denote this line bundle as $L_s(E)$, and note that, via a standard result in bundle theory, this line bundle is unique up to isomorphism. \par 
The fibre-preserving diffeomorphism $\Phi_s : L_s(E) \stackrel{\sim}{\rightarrow} E$, is defined in a trivialisation by $\Phi^*(x^a, r_i) = (x^a, t_i + s_i(x))$, where $s_i(x)$ is the coordinate expression for $s$.  
\end{proof}
\begin{proof}[Proof of Theorem~ \ref{thrm:CarrManAreCarrBun}]
Let $N$ be of dimension $n+1$. A simple Carrollian manifold $(N,h)$ is endowed with a rank-one distribution whose integral submanifolds (leaves) form a simple foliation $\mathcal{F}$. As we have a simple foliation with the leaves being diffeomorphic to $\R$, we know that $N$ is the total space of a fibre bundle $\pi:N \rightarrow M$, with typical fibre $\R$, and the base manifold is the leaf space $M:=N / \mathcal{F}$, which is an $n$-dimensional manifold. Moreover, $M$ is paracompact and connected. \par 
We want to construct an atlas for this fibre bundle. A key step is identifying the base manifold $M$ as a submanifold of $N$. There is no canonical choice here. However, as we have a simple foliation, we know that global transversal slices exist, and these serve as a choice of embedding the base manifold inside the total space. Let us choose a transversal slice $\bar{M}\subset N$. We identify this slice with the base manifold $M$ via an embedding $\iota:M \rightarrow N$ such that $\iota(M) = \bar{M}$. With this choice made, we have local trivialisations 
$$\phi_i : \pi^{-1}(U_i) \stackrel{\sim}{\rightarrow} U_i \times \R\,, \qquad \phi_i(p)= (m, r)\,, $$
and associated transition functions 
$$\phi_{ij}(m, r) := (m, \psi_{ij}(m,r))\,.$$
This bundle atlas need not be fibrewise linear. Thus, in general, we do not obtain a line bundle from a choice of transversal slice. From Lemma \ref{lem:LinFibBun}, we can linearise $\pi : E \rightarrow M$ by selecting a global section (which is guaranteed to exist). We denote the linearisation (about a section $s\in \Sec(N)$)  as $L_s(N,\bar{M})$ and the associated fibre-preserving diffeomorphism as $\Phi_s : L_s(N,\bar{M}) \rightarrow N$. Thus, given any simple Carrollian manifold, we can non-canonically associate with it a line bundle via a choice of transversal slice and a section.  The associated $\R^\times$-principal bundle we denote as $P_s(N, \bar{M}):= L_s(N, \bar{M})^\times$.\par
The degenerate metric $h$ on $N$, induces a degenerate metric on $L_s(N, \bar{M})$, i.e., we define $\bar{h} :=  (\sT \Phi_s)^* h$. We have a smooth inclusion (just delete the zero in each fibre) $ \i : P_s(N, \bar{M}) \rightarrow L_s(N, \bar{M}) $, and so $\sT \i : \sT P_s(N, \bar{M}) \rightarrow \sT L_s(N,\bar{M})$ defines a submanifold. The degenerate metric on $P_s(N, \bar{M})$ is then defined as $g = (\sT \i)^*\bar{h}$. \par 
 We need to check that $\ker(g) = \Sec(\sV P(N))$. We have a smooth inclusion $\Phi_s \circ \i : P_s(N, \bar{M})\rightarrow N$, which induces smooth inclusions
\begin{align*}
\sT(\Phi_s \circ \i)|_{\sV P_s(N, \bar{M})}: &\, \sV P_s(N, \bar{M}) \rightarrow  \sV N\,,\\
\sT(\Phi_s \circ \i)|_{\ker(g)}: &\, \ker(g) \rightarrow  \sV N\,,
\end{align*}
where we understand $\ker(g)$ as a line bundle in $\sT P_s(N, \bar{M})$, and similarly, we make the identification $\ker(h) = \sV N$.\par
Let $v \in \sV P_s(N, \bar{M})$, then $\sT(\Phi_s \circ \i)|_{\sV P_s(N, \bar{M})}(v) = \sT(\Phi_s \circ \i)(v) \in \sV N $. As $\sT(\Phi_s \circ \i)|_{\ker(g)}$ maps $\ker(g)$ to $\sV N$ if $\sT(\Phi_s \circ \i)(v) \in \sV N $, then $v \in \ker(g)$. Thus, $\sV P_s(N, \bar{M}) \subseteq \ker(g)$.  \par 
\smallskip
Similarly, let $v \in \ker(g)$, then $\sT(\Phi_s \circ \i)|_{\ker(g)}(v) = \sT(\Phi_s \circ \i)(v) \in \sV N $. As $\sT(\Phi_s \circ \i)|_{\sV P_s(N, \bar{M})}$ maps $\sV P_s(N, \bar{M})$ to $\sV N$ if $\sT(\Phi_s \circ \i)(v) \in \sV N $, then $v \in \sV P_s(N, \bar{M})$. Thus, $\ker(g)\subseteq\sV P(N)$.\par 
Together, we conclude that (as line bundles) $\ker(g) = \sV P_s(N, \bar{M})$, so as vector fields $\ker(g) = \Sec(\sV P_s(N,\bar{M}))$.
\end{proof}
Theorem \ref{thrm:CarrManAreCarrBun} can diagrammatically be paraphrased as 
$$(N, h) \xrightarrow{\substack{\textnormal{Transversal Slice} \\ \textnormal{+ Linearise}}} (L, \bar{h}) \xrightarrow{\substack{\textnormal{Remove} \\ \textnormal{Zero Section}}} (P,g)$$
\begin{remark}
The construction of a  Carrollian $\R^\times$-bundle is not canonical, as it requires a choice of embedded codimension-one submanifold.  However, in applications, there may be natural submanifolds to consider. The reader is reminded of Example \ref{exp:TransSlice}.
\end{remark}
\begin{example}[Schwarzschild black hole]\label{exa:BHHorizon}
Consider the Schwarzschild black hole $(\mathcal{M} = \R^2 \times S^2, \rmd s^2)$, where we will employ Eddington--Finkelstein coordinates to write the spacetime interval as
$$\rmd s^2 = - \left(1 - \frac{2 G M}{r} \right)\rmd v^2 + 2\, \rmd v \rmd r + r^2 \rmd \Omega^2\,,$$
where $\rmd \Omega^2$ is the interval (metric) on the round sphere $S^2$. Here $M$ is the mass of the black hole and $G$ is Newton's constant.  The event horizon is defined as $N := \{ p\in \mathcal{M} ~~|~~ r(p) = 2GM\} = S^2 \times \R$. The induced degenerate metric on $N$ is $h = (2GM)^2 g_{S^2}$, where $g_{S^2}$ is the round metric on $S^2$. Clearly, $\ker(h) = \Span(\partial_v)$, and thus the event horizon of a Schwarzschild black hole is a simple Carrollian manifold. Note that $\partial_v$ is a Killing vector field.  The points of $N$ can be parametrised as pairs $(\mathbf{u}, v)$, where $\mathbf{u}\in S^2$ and $v \in \R$. We fix a transversal slice $\iota : S^2 \rightarrow S^2 \times \R$ defined by $\mathbf{u} \mapsto (\mathbf{u}, 0)$, i.e., we pick the sphere at $v =0$.  The  bundle structure is  $N = S^2 \times \R$ with the projection $(\mathbf{u}, v) \mapsto (\mathbf{u}, 0) = \mathbf{u}$. As we have a trivial bundle with typical fibres $\R$, the linearisation process is trivial; we can just use the zero section. Thus, we identify $L_0(N, S^2)= N$. The principal  $\R^\times$-bundle is given by removing the zero section, and as the line bundle is trivial, $P_0(N, S^2) = S^2 \times \R^\times$, we denote the global $\R$ coordinate as $t$. The projection is $(\mathbf{u}, t) = \mathbf{u}$. The degenerate metric is easily seen to be $g = (\sT \i)^*h = (2GM)^2 g_{S^2}$. The Euler vector field $\Delta =  t \partial_t$ spans $\ker(g)$, which is a section of the vertical bundle. Thus, we have a Carrollian $\R^\times$-bundle. Moreover, as $\mathcal{L}_\Delta g =0$, the Euler vector field $\Delta$ is Killing and so all sections of the vertical bundle are Killing, see Proposition \ref{prop:Killing}.
\end{example}
\begin{example}[Thakurta spacetime]\label{exa:ThakutraSP}
Consider the non-spinning Thakurta spacetime $(\mathcal{M} = \R^2 \times S^2, \rmd s^2)$, where we will employ Eddington--Finkelstein coordinates to write the spacetime interval as
$$\rmd s^2 = \rme^{-U(v)} \left(- \left(1 - \frac{2 G M}{r} \right)\rmd v^2 + 2\, \rmd v \rmd r + r^2 \rmd \Omega^2\right)\,,$$
where $\rmd \Omega^2$ is the interval (metric) on the round sphere $S^2$.  The event horizon is defined as $N := \{ p\in \mathcal{M} ~~|~~ r(p) = 2GM\} = S^2 \times \R$. The induced degenerate metric on $N$ is $h = (2GM)^2 \rme^{-U(v)} g_{S^2}$, where $g_{S^2}$  is the round metric on $S^2$. Clearly, $\ker(h) = \Span(\partial_v)$, and thus the event horizon is a simple Carrollian manifold.  The points of $N$ can be parametrised as pairs $(\mathbf{u}, r)$, where $\mathbf{u}\in S^2$ and $r \in \R$. We fix a transversal slice $\iota : S^2 \rightarrow S^2 \times \R$ defined by $\mathbf{u} \mapsto (\mathbf{u}, 0)$, i.e., we pick the sphere at $r =0$.  The  bundle structure is  $N = S^2 \times \R$ with the projection $(\mathbf{u}, v) \mapsto (\mathbf{u}, 0) = \mathbf{u}$. As we have a trivial bundle with typical fibres $\R$, the linearisation process is trivial; we can just use the zero section. Thus, we identify $L_0(N, S^2)= N$.  The principal  $\R^\times$-bundle is given by removing the zero section, and as the line bundle is trivial, $P_0(N, S^2) = S^2 \times \R^\times$. The projection is $(\mathbf{u}, t) = \mathbf{u}$. The degenerate metric is easily seen to be $g = (\sT \i)^*h = (2GM)^2 \rme^{-U(t)} g_{S^2}$. The Euler vector field $\Delta =  t \partial_t$ spans $\ker(g)$, which is a section of the vertical bundle. Thus, we have a Carrollian $\R^\times$-bundle. Note, $\mathcal{L}_\Delta g  = (1- \dot{U}(t))g \neq 0$, so $\Delta$ is not Killing, but rather a conformal Killing vector field.
\end{example}
%
%%%%%%%%%%%%%%%%%%%%%%%%%%%
%
\subsection{Carrollian $\R^\times$-bundles with a Connection}\label{subsec:CarRBunConn}
Recall that an $\R^\times$-connection on a principal $\R^\times$-bundle is a projection $\Phi : \sT P \rightarrow \sT P$ that allows for the decomposition $\sT P \simeq \sH P \oplus \sV P$, with $ \Image \Phi =  \sV P$ and $\ker \Phi = \sH P$ (see Definition \ref{def:RXConn}). 
\begin{definition}
A \emph{Carrollian $\R^\times$-bundle with a connection} is a triple $(P, g, \Phi)$, where $(P,g)$ is a Carrollian $\R^\times$-bundle and $\Phi$ is a $\R^\times$-connection on $P$.
\end{definition}
Proposition \ref{prop:RXConnExist} tells us that we can always equip a Carrollian $\R^\times$-bundle with an $\R^\times$-connection. This connection is not, in general, canonical and certainly not unique. Given a connection, we have the decomposition
$$\Vect(P) \simeq \Sec(\sH P)\oplus \Sec(\sV P) = \Sec(\sH P)\oplus \ker(g)\,.$$
Writing $X = X_\sH + X_\sV \in \Vect(P) $, we observe that $g(X_\sH, Y_\sV) = 0$ as $Y_\sV \in \ker(g)$. Thus, we have the following direct result.
\begin{proposition}
Let  $(P, g, \Phi)$ be a Carrollian $\R^\times$-bundle with a connection. Then  $\Sec(\sH P)$ and $\ker(g)$ are orthogonal.
\end{proposition}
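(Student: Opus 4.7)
The plan is to observe that this proposition is essentially a tautology once the definitions are unwound, so the proof will be a single-line invocation of Definition \ref{def:CarrBun} together with the direct sum decomposition supplied by the connection $\Phi$. The author has in fact already sketched it in the paragraph preceding the statement; the role of the proposition is to record that fact as a named result for later reference.

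First I would fix arbitrary sections $X_\sH \in \Sec(\sH P)$ and $Y_\sV \in \ker(g)$. By Definition \ref{def:CarrBun}, a section $Y \in \Vect(P)$ lies in $\ker(g)$ if and only if $g(Y, Z) = 0$ for all $Z \in \Vect(P)$. Applying this to $Z = X_\sH$ immediately gives $g(X_\sH, Y_\sV) = 0$, which is the definition of orthogonality of the two submodules. No appeal to the connection $\Phi$ is strictly needed at this stage beyond knowing that $\Sec(\sH P)$ embeds into $\Vect(P)$; the splitting $\Vect(P) \simeq \Sec(\sH P) \oplus \Sec(\sV P) = \Sec(\sH P)\oplus \ker(g)$ coming from Definition \ref{def:RXConn} and Definition \ref{def:CarrBun} is what makes the statement nontrivial to phrase, but not to verify.

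There is no real obstacle here: the proposition is a direct consequence of the degeneracy condition $\ker(g) = \Sec(\sV P)$ built into the definition of a Carrollian $\R^\times$-bundle. If I wanted to make the argument more substantial I could, as a coordinate-level sanity check, write a general vector field as $X = X^a D_a + X^t \Delta_P$ in the local frame $\{D_a, \Delta_P\}$ associated to the connection (with $D_a = \partial_a + A_a(x)\Delta_P$ from the discussion of the connection one-form) and verify using the local form $g = \dot{x}^a\dot{x}^b g_{ba}(x,t)$ that $g(X_\sH, Y_\sV)$ contains no surviving terms; but this only re-derives the abstract one-line argument.
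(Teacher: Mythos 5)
Your argument is correct and is exactly the one the paper gives in the paragraph immediately preceding the proposition: since $Y_\sV \in \ker(g)$ means $g(Y_\sV, Z) = 0$ for all $Z \in \Vect(P)$, the orthogonality with $\Sec(\sH P)$ is immediate. You have also correctly identified that the proposition merely records this one-line observation as a named result.
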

An $\R^\times$-connection gives a splitting of the tangent space  $\sT_p P \simeq \sH_p P \oplus \sV_p P$ for all points $p  \in P$. We interpret this splitting as locally defining temporal and spatial directions that are orthogonal.
\begin{proposition} Let $(P, g, \Phi)$ be a Carrollian $\R^\times$-bundle with a connection. Then $g^{(+)}_\omega := g + \omega^2$ and   $g^{(-)}_\omega := g - \omega^2$ are non-degenerate metrics on $P$. 
\end{proposition}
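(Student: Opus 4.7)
The plan is to show that the kernels of both $g^{(\pm)}_\omega$ are trivial by exploiting the orthogonal splitting $\Vect(P) = \Sec(\sH P) \oplus \Sec(\sV P)$ supplied by the connection, together with two complementary vanishings: $g$ annihilates all of $\Sec(\sV P)$ by the defining condition $\ker(g) = \Sec(\sV P)$, and $\omega$ annihilates all of $\Sec(\sH P)$ since $\sH P = \ker \omega$.

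First, I would decompose arbitrary vector fields as $X = X_\sH + X_\sV$ and $Y = Y_\sH + Y_\sV$ and compute $g^{(\pm)}_\omega(X,Y)$ block by block. The cross terms $g(X_\sH, Y_\sV)$ and $g(X_\sV, Y_\sH)$ vanish because vertical vectors lie in $\ker(g)$, and the term $\omega(X_\sH)\omega(Y_\sH)$ vanishes because $\sH P = \ker \omega$. Interpreting $\omega^2$ as the symmetric tensor square $\omega \odot \omega$, this yields
\[
g^{(\pm)}_\omega(X,Y) \;=\; g(X_\sH, Y_\sH) \;\pm\; \omega(X_\sV)\,\omega(Y_\sV),
\]
exhibiting $g^{(\pm)}_\omega$ as a signed orthogonal direct sum of its restrictions to the horizontal and vertical subbundles.

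Next I would check each summand is non-degenerate. On the vertical side, the identification $X_\sV = X^t \Delta_P$ together with $\omega(\Delta_P) = \mathbf{1}$ gives $\omega(X_\sV) = X^t$, so on the one-dimensional vertical bundle the quadratic form $\pm \omega^2$ is (up to sign) the square of a non-vanishing linear form and hence non-degenerate. On the horizontal side, suppose $X_\sH \in \Sec(\sH P)$ satisfies $g(X_\sH, Y_\sH) = 0$ for every horizontal $Y_\sH$; combined with the automatic $g(X_\sH, Y_\sV) = 0$ on vertical arguments, this forces $g(X_\sH, Z) = 0$ for every $Z \in \Vect(P)$, so $X_\sH \in \ker(g) \cap \Sec(\sH P) = \{0\}$. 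Therefore $g|_{\sH P}$ is non-degenerate, and $g^{(\pm)}_\omega$ is non-degenerate as an orthogonal direct sum.

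I do not anticipate a real obstacle here; the statement is essentially a structural consequence of the orthogonal decomposition provided by the connection, with the two pieces of $g^{(\pm)}_\omega$ detecting precisely the two complementary subbundles on which $g$ and $\omega$ are respectively non-degenerate. As an independent check I would run the local computation: writing $\omega = \dot{\mathbf{t}} + \dot{x}^a A_a(x)$, the coefficient matrix of $g^{(\pm)}_\omega$ in adapted coordinates has base block $g_{ba} \pm A_a A_b$, off-diagonal entries $\pm A_a$, and fibre entry $\pm 1$; the Schur complement collapses the off-diagonal contribution and gives $\det(g^{(\pm)}_\omega) = \pm \det(g_M) \neq 0$, corroborating the invariant argument and confirming consistency under the admissible coordinate transformations \eqref{eqn:AdmCooTrasTP}.
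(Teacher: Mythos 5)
Your proposal is correct and follows essentially the same route as the paper: split $X = X_\sH + X_\sV$ using the connection and observe that $g$ is non-degenerate on the horizontal part (since $\ker(g) = \Sec(\sV P)$) while $\pm\omega^2$ is non-degenerate on the vertical part (since $\omega(\Delta_P) = \mathbf{1}$). Your version is more carefully argued than the paper's two-line sketch --- in particular the explicit orthogonal block decomposition handles mixed vectors cleanly, and the Schur-complement check $\det(g^{(\pm)}_\omega) = \pm\det(g_M)$ in the adapted coordinate $\dot{\mathbf{t}}$ is a correct corroboration --- but it is not a different method.
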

\begin{proof}
We will write $g^{(\pm)}_\omega = g \pm \omega^2$. Splitting $X = X_\sH + X_\sV$ is clear that $g^{(\pm)}_\omega(X_\sH,-) = g(X_\sH,-) \neq 0$ and  $g^{(\pm)}_\omega(X_\sV,-) = \pm 2 \,\omega(X_\sV)\,\omega(-) \neq 0$.
\end{proof}
We will consider $g^{(+)}_\omega = g +\omega^2 $ for concreteness, and write this (locally) as a matrix (in hopefully clear notation),
$$g^{(+)}_\omega = \begin{pmatrix}g_M +AA^{\sT} & t^{-1}A \\ t^{-1}A^{\sT} & t^{-2}\end{pmatrix}\,, \qquad g = \begin{pmatrix}g_M & 0 \\ 0 & 0\end{pmatrix}\,.$$
Thus, we have a kind of Kaluza--Klein geometry in which the extra dimension is not compact and the scalar field is the unit function. The Christoffel symbols for the metric $g^{(+)}_\omega$ are related to the  Christoffel symbols of the metric $g_M$ (assuming $\mathcal{L}_{\Delta_P} g =0$) in the same way as they are in standard Kaluza--Klein geometry (see, for example \cite{Kerner:2001}). In essence, there is a Kaluza--Klein theory behind $\R$-connections on Carrollian $\R^\times$-bundles.  We will not pursue this idea further here.
Written out locally, we have 
\begin{equation}\label{eqn:MetOnP}
g^{(+)}_\omega = \dot{x}^a \dot{x}^b( g_{ba}(x,t) + A_aA_b(x)) + 2 \dot{\mathbf{t}}\dot{x}^aA_a(x) + \dot{\mathbf{t}}^2\,.
\end{equation}
\begin{remark}
The term $\dot{\mathbf{t}}^2$ of \eqref{eqn:MetOnP} is interpreted as the fibre metric, or more carefully (as a $1\times 1$ matrix) $g_{tt}(x,t) = t^{-2}$. Thus, once we have a fibre metric $g_{tt}(x,t)$ ($\R$-invariant or not), we can define a new non-degenerate metric on $P$ as   $g'^{(+)}_{\omega} = \dot{x}^a \dot{x}^b( g_{ba}(x,t) + A_aA_b(x)) + 2 \dot{\mathbf{t}}\dot{x}^aA_a(x) + \dot{\mathbf{t}}^2g_{tt}(x,t)$.
\end{remark}
\begin{proposition}\label{prop:TorsionlessCon}
Let  $(P, g, \Phi)$ be a Carrollian $\R^\times$-bundle with a connection. Then $P$ can canonically be equipped with an affine connection that is torsionless, but not, in general, metric compatible. 
\end{proposition}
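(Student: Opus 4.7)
The plan is to take the Levi-Civita connection of the non-degenerate metric $g^{(+)}_\omega = g + \omega^2$ constructed just above and declare it to be the canonical affine connection on $P$. Once the sign convention is fixed, $g^{(+)}_\omega$ depends only on the data $(P,g,\Phi)$ via the explicit local expression \eqref{eqn:MetOnP}, and it is genuinely non-degenerate by the preceding proposition. Hence the fundamental theorem of (pseudo-)Riemannian geometry applies and yields a unique torsionless affine connection $\nabla$ on $P$ compatible with $g^{(+)}_\omega$. Declaring $\nabla$ to be the associated affine connection gives both canonicity and torsionlessness for free.

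The remaining (and only substantive) step is to show that $\nabla$ is not, in general, compatible with the original degenerate metric $g$. For this I would apply $\nabla$ to the identity $g = g^{(+)}_\omega - \omega\otimes\omega$ and use $\nabla g^{(+)}_\omega = 0$ to get
\begin{equation}
\nabla g = -(\nabla\omega)\otimes \omega - \omega \otimes (\nabla\omega).
\end{equation}
For $\nabla g$ to vanish identically, one would need $\nabla\omega$ to take values essentially along $\omega$, which in adapted coordinates translates into a very restrictive algebraic-differential condition on the gauge field $A_a(x)$ and the base metric $g_{ba}(x,t)$ that certainly fails for generic choices. To make this rigorous I would exhibit a single explicit counterexample: for instance, take the trivial bundle $P = M\times \R^\times$ over a non-flat Riemannian $(M,g_M)$ (as in Example \ref{exm:RiemGeo}) with the trivial connection $\omega = \dot{\mathbf{t}}$, so that $g^{(+)}_\omega$ is the warped product $g_M + t^{-2}\rmd t^2$, and verify by direct computation that $\nabla g\neq 0$ (the $t$-derivative of the fibre-metric coefficient $t^{-2}$ produces Christoffel symbols that do not annihilate $g$).

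The main obstacle is largely bookkeeping rather than conceptual: checking that the Levi-Civita connection exists and is torsionless is immediate, but one should be careful that \emph{canonical} here means ``depending only on $(P,g,\Phi)$,'' which requires fixing the sign convention $g^{(+)}_\omega$ (the alternative $g^{(-)}_\omega$ would give a different, equally canonical connection). I would briefly comment on this choice in the proof so that the adjective ``canonically'' is unambiguous, and then invoke the standard Koszul formula for $g^{(+)}_\omega$ to close out the argument.
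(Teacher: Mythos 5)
Your overall route is exactly the one the paper takes: declare the Levi-Civita connection $\nabla$ of $g^{(+)}_\omega$ to be the canonical affine connection, obtain torsion-freeness and canonicity from the fundamental theorem of (pseudo-)Riemannian geometry, and then differentiate the decomposition $g = g^{(+)}_\omega - \omega\otimes\omega$ using $\nabla g^{(+)}_\omega=0$ to get $\nabla g = -(\nabla\omega)\otimes\omega - \omega\otimes(\nabla\omega)$; this is precisely the identity $\nabla_C g_{ab} = -(\nabla_C A_a)A_b - A_a(\nabla_C A_b)$ in the paper's proof, written invariantly. Your remark that ``canonical'' hinges on fixing the sign convention $g^{(\pm)}_\omega$ is a fair observation that the paper leaves implicit.

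The genuine gap is that the explicit counterexample you propose to certify non-compatibility fails. For the trivial bundle with the trivial connection $\omega=\dot{\mathbf{t}}$ over a $t$-independent base metric $g_M$, the substitution $u=\ln|t|$ turns $g^{(+)}_\omega = g_M + t^{-2}\rmd t^2$ into the Riemannian \emph{product} $g_M + \rmd u^2$ (not a warped product, and non-flatness of $M$ plays no role). Its Levi-Civita connection is the product connection and one checks $\nabla\omega=0$ directly: e.g.\ $\nabla_t\omega_t = \partial_t(t^{-1}) - \Gamma^t_{tt}\,t^{-1} = -t^{-2}+t^{-2}=0$, while the only nontrivial extra Christoffel symbol $\Gamma^t_{tt}=-t^{-1}$ never meets a non-zero component of $g$. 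Hence by your own identity $\nabla g = 0$ in this case --- the connection \emph{is} compatible with the degenerate metric, as one also sees in the paper's Schwarzschild example. To exhibit genuine failure of compatibility you need either a principal connection with non-vanishing curvature, in which case $\nabla_c g_{at} = -\Gamma^d_{ct}\,g_{ad} = -\tfrac{1}{2t}F_{ac}\neq 0$, or a $t$-dependent $g_M$ as in the Thakurta example, where $\nabla_a g_{bt} = -\Gamma^c_{at}\,g_{bc} = \tfrac{1}{2}\dot{U}(t)\,g_{ab}\neq 0$. With one of these substituted for your example, the argument closes and coincides with the paper's.
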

\begin{proof}
The Levi-Civita connection $\nabla$ of $g^{(\pm)}_\omega$ provides the torsionless affine connection. While the Levi-Civita connection is metric compatible with respect to $g_\omega$, it need not be with respect to $g$. Observe that $\nabla_C (g_\omega)_{ab} =\nabla_C g_{ab}  + \nabla_C A_a A_b + A_a\nabla_C A_b =0 $, where the index $C = (c,t)$. Thus, we do not, in general, have metric compatibility with $g$.
\end{proof}
Coupled with Proposition \ref{prop:RXConnExist}, we observe that given any Carrollian $\R^\times$-bundle $(P,g)$, one can always equip $P$ with a torsionless affine connection. For completeness, the (non-vanishing) Christoffel symbols  for $g^{(+)}_\omega$ are  
\begin{align*}
 & \Gamma^c_{ab} =\tilde{\Gamma}^c_{ab} + \frac{1}{2} (g_M)^{cd} (A_b F_{ad} + A_a F_{bd}) + \frac{1}{2} (g_M)^{cd}A_d t\frac{\partial (g_M)_{ab}}{\partial t}\,,\\
 &\Gamma^c_{at} = \Gamma^c_{ta} = \frac{1}{2} (g_M)^{cd} \left( \frac{\partial (g_M)_{ad}}{\partial t} + \frac{1}{t}F_{da} \right)\,,\\
 &\Gamma^t_{ab} = -t A_c \tilde{\Gamma}^c_{ab} - \frac{t}{2} (g_M)^{cd}A_d (A_b F_{ac} + A_a F_{bc}) + \frac{t}{2}(\partial_a A_b + \partial_b A_a) - \frac{t^2}{2}(1 + (g_M)^{cd}A_c A_d)\frac{\partial (g_M)_{ab}}{\partial t}\,,\\
 &\Gamma^t_{at} = \Gamma^t_{ta} =-\frac{1}{2}(g_M)^{cd}A_d F_{ca} - \frac{t}{2}(g_M)^{cd}A_d \frac{\partial g_{cd}}{\partial t} \,,\\
 & \Gamma^t_{tt}=-\frac{1}{t}\,,
\end{align*} 
where $\tilde{\Gamma}^c_{ab}$  are the Christoffel symbols of the (possibly time dependent) metric $g_M$, and $F$ is the curvature of the principal connection, i.e., $F_{ab} =  \partial_a A_b -\partial_b A_a$.\par 
Notice that two problematic  Christoffel symbols require consideration when using the smooth inclusion $P \hookrightarrow L$, i.e.,  $\Gamma^c_{at} \sim t^{-1}$ and $ \Gamma^t_{tt}\sim t^{-1}$. We will assume that $g_M$ is smooth at $t=0$.  One can quickly observe that for (non-zero) vector fields $X \in \Vect(L)$, the regularity condition that their components are at least linear in $t$ near the zero section must be satisfied. We refer to such vector fields on $L$ as \emph{regular vector fields}. We thus have the following proposition.
\begin{proposition}\label{prop:ExtLCcon}
Let  $(P, g, \Phi)$ be a Carrollian $\R^\times$-bundle with a connection. Then the Levi-Civita connection $\nabla$ associated with $g^{(+)}_\omega$ provides an affine connection on regular vector fields on the line bundle $L$, where $L^\times = P$.
\end{proposition}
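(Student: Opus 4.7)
The approach is a direct local verification: the Levi-Civita connection on $P$ is already well-defined, so the only issue is extending it smoothly across the zero section of $L$. I would work in a trivialising chart $(x^a,t)$ on $L$ adapted to the chosen trivialisation of $P$, in which a regular vector field takes the form $X=t\,\widetilde X^a\,\partial_a+t\,\widetilde X^t\,\partial_t$ with $\widetilde X^a,\widetilde X^t$ smooth on $L$ (and similarly for $Y$). Scanning the list of Christoffel symbols of $g^{(+)}_\omega$ given after Proposition \ref{prop:TorsionlessCon}, the only ones that fail to be smooth at $t=0$ are the term $\tfrac{1}{2t}(g_M)^{cd}F_{da}$ inside $\Gamma^c_{at}$ and the symbol $\Gamma^t_{tt}=-1/t$; all others are polynomial in $t$ (assuming, as stated, that $g_M$ is smooth at $t=0$).

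Next, I would isolate the potentially singular contributions in each component of $\nabla_XY$ written in the basis $\{\partial_a,\partial_t\}$. In the $t$-component, the only offending piece is $X^t\,\Gamma^t_{tt}\,Y^t=-X^tY^t/t$, which by regularity equals $-t\,\widetilde X^t\widetilde Y^t$, hence extends smoothly across $t=0$. In the $c$-component, using the symmetry $\Gamma^c_{at}=\Gamma^c_{ta}$, the offending piece is $(X^aY^t+X^tY^a)\tfrac{1}{2t}(g_M)^{cd}F_{da}$, which by regularity equals $\tfrac{t}{2}(\widetilde X^a\widetilde Y^t+\widetilde X^t\widetilde Y^a)(g_M)^{cd}F_{da}$, again smooth. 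All remaining terms in $(\nabla_XY)^A$ are products of smooth coefficients with $X^B$, $Y^B$ or their partial derivatives and are therefore manifestly smooth on $L$.

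I would then verify that $\nabla_XY$ is itself a regular vector field so that $\nabla$ preserves the class under consideration and genuinely defines an affine connection on regular vector fields: a term-by-term inspection of the above computation shows that every surviving contribution to $(\nabla_XY)^a$ and $(\nabla_XY)^t$ carries at least one uncancelled factor of $t$, and so vanishes on the zero section. Linearity in $X$ over $C^\infty(L)$ and the Leibniz rule in $Y$ are inherited from the Levi-Civita connection on $P$, and both operations clearly send the module of regular vector fields to itself. Finally, I would confirm that the defining condition (vanishing of components on the zero section) is independent of the choice of trivialisation by applying an admissible change $x^{a'}=x^{a'}(x)$, $t'=\phi(x)t$ compatible with \eqref{eqn:AdmCooTrasTP}: the new components are smooth $C^\infty(L)$-linear combinations of the old ones together with multiplication by $\phi$ and by $t$, so vanishing at $t=0$ is preserved.

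The main obstacle is entirely bookkeeping: matching each $t^{-1}$ factor from the two singular Christoffel symbols with precisely two factors of $t$ supplied by the components of $X$ and $Y$, and verifying that the result is again regular rather than merely smooth (so that one obtains a connection on a fixed module, not a covariant derivative taking values in a larger space). No conceptual difficulty arises, provided the regularity condition is imposed on both arguments, as indicated in the discussion preceding the proposition.
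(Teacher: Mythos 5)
Your proposal is correct and takes essentially the same route as the paper, which leaves the verification implicit in the paragraph preceding the proposition: isolate the two singular Christoffel symbols $\Gamma^c_{at}\sim t^{-1}$ and $\Gamma^t_{tt}=-t^{-1}$, and observe that the regularity condition on the components of $X$ and $Y$ supplies the factors of $t$ needed to cancel them. Your additional checks --- that $\nabla_X Y$ is again regular, and that regularity of the components is stable under the admissible coordinate changes --- are welcome details the paper omits (your only slip is calling the remaining Christoffel symbols ``polynomial in $t$'' when they are merely smooth at $t=0$).
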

\begin{remark}
Proposition \ref{prop:ExtLCcon} directly generalises to the Levi-Civita connection associated with $g^{(-)}_\omega$ upon taking care of the extra minus sign.
\end{remark}
\begin{example}[Schwarzschild black hole]
Continuing Example \ref{exa:BHHorizon}, consider $P = S^2 \times \R^\times$, equipped with the degenerate metric $g = (2GM)^2\, g_{S^2}$. Note that $\mathcal{L}_\Delta g =0$, i.e., the degenerate metric is independent of the fibre coordinate. As the bundle structure is trivial, we can canonically select the trivial connection $\omega = \mathbf{\dot{t}} = \dot{t}\,t^{-1}$. The Riemannian metric on $P$ is $g^{(+)}_\omega = (2GM)^2\, g_{S^2} + \mathbf{\dot{t}}^2$. As the trivial connection is flat, i.e., $F_{ab} =0$ as $A_a =0$, the non-vanishing Christoffel symbols are
$$\Gamma_{ab}^c = \tilde{\Gamma}_{ab}^c\,, \qquad \Gamma_{tt}^t = - t^{-1}\,, $$
where $\tilde{\Gamma}_{ab}^c$ are the Christoffel symbols on $S^2$ equipped with the standard round metric.
\end{example}
\begin{example}[Thakurta spacetime]
Continuing Example \ref{exa:ThakutraSP}, consider $P = S^2 \times \R^\times$, equipped with the degenerate metric $g = (2GM)^2\, \rme^{-U(t)} g_{S^2}$. As the bundle structure is trivial, we can canonically select the trivial connection $\omega = \mathbf{\dot{t}} = \dot{t}\,t^{-1}$. The Riemannian metric on $P$ is $g^{+}_\omega = (2GM)^2\,\rme^{-U(t)} g_{S^2} + \mathbf{\dot{t}}^2$. As the trivial connection is flat, i.e., $F_{ab} =0$ as $A_a =0$, the non-vanishing Christoffel symbols are
\begin{align*}
& \Gamma_{ab}^c = \tilde{\Gamma}_{ab}^c\,, && \Gamma_{at}^c = \Gamma_{ta}^c =  -\frac{1}{2}\dot{U}(t) \, \delta_a^c\,, \\
& \Gamma_{ab}^t =  \frac{t^2}{2} \dot{U}(t)\, g_{ab}\,,&& \Gamma_{tt}^t = - t^{-1}\,,
\end{align*}
\end{example}
As $g^{(+)}_\omega$ is non-degenerate, we have $\det(g^{(+)}_\omega) =  t^{-2}\det(g_M) \neq 0$ (though $\det(g)=0$). Thus, the canonical volume is (locally)
$$\mathsf{Vol}_P = t^{-2}\det(g_M) =   \sqrt{|g_M|}(x,t) t^{-1}\, \rmd x^1\wedge \cdots \wedge \rmd x^n\wedge \rmd t\,.$$
The divergence operator is then defined as standard, $\mathcal{L}_X \mathsf{Vol}_P = \mathsf{Div}(X)\,\mathsf{Vol}_P$. Note that the divergence operator does not depend on the chosen connection.  We can then write
$$\mathsf{Div}(X) = \frac{1}{\sqrt{|g_M|} \, t^{-1}} \tr\big(g_M^{-1} \mathcal{L}_X g_M \big)\,.$$
A direct consequence of Proposition \ref{prop:Killing} is the following. 
\begin{proposition}
Let $(P, g, \Phi)$ be a Carrollian $\R^\times$-bundle with a connection. If $\mathcal{L}_{\Delta_P}g = 0$, then every $X_\sV \in \ker(g)$ is divergenceless.
\end{proposition}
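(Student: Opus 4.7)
The proposition is flagged as a direct consequence of Proposition~\ref{prop:Killing}, so my plan is to combine the Killing property established there with the divergence formula displayed immediately above the statement. I would begin by using the global frame of $\sV P$ to write an arbitrary vertical vector field as $X_\sV = f\,\Delta_P$ with $f \in C^\infty(P)$. Proposition~\ref{prop:Killing} then gives $\mathcal{L}_{X_\sV} g = 0$ at once, using the hypothesis $\mathcal{L}_{\Delta_P} g = 0$. Because this hypothesis also forces the local expression $g = \dot{x}^a \dot{x}^b g_{ab}(x,t)$ to be $t$-independent, and because the components $g_{ab}(x)$ are then precisely those of the induced non-degenerate base metric $g_M$ provided by Proposition~\ref{prop:KillingEulerMetric}, the vanishing of $\mathcal{L}_{X_\sV} g$ translates directly into $\mathcal{L}_{X_\sV} g_M = 0$ (both regarded as functions on $\sT P$). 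Substituting this into the displayed divergence formula
\[
\mathsf{Div}(X_\sV) \,=\, \frac{1}{\sqrt{|g_M|}\,t^{-1}}\, \tr\bigl(g_M^{-1}\, \mathcal{L}_{X_\sV} g_M\bigr)
\]
yields $\mathsf{Div}(X_\sV) = 0$, which is the claim.

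The only technical point I expect to spell out is the identification between $\mathcal{L}_{X_\sV} g$ and $\mathcal{L}_{X_\sV} g_M$ on $\sT P$: strictly, $X_\sV$ lives on $P$ and does not descend to $M$, so $g_M$ must be interpreted as its pullback along $\sT\pi$ before the Lie derivative is computed. This is a short unwinding of the tangent-lift formula~\eqref{eqn:LieDer} applied to $X_\sV = f\,\Delta_P$: the horizontal part is absent because $X_\sV$ carries no $\partial_a$-component, the $\partial_{\dot{\mathbf{t}}}$-part annihilates $g_M$ since $g_M$ has no $\dot{\mathbf{t}}$-dependence, and the only remaining contribution is $f\,(\Delta_P g_M)$, which vanishes by the Killing hypothesis on $\Delta_P$. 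Once this identification is secured, the application of the divergence formula is a one-line substitution and the proof is complete; the hard part, namely promoting $\Delta_P$-invariance of $g$ to Killing-ness of every $X_\sV \in \ker(g)$, has already been done in Proposition~\ref{prop:Killing}.
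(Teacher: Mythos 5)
Your strategy---reduce to Proposition \ref{prop:Killing} and substitute into the displayed trace formula---is exactly what the paper intends (it offers no proof beyond calling the statement a direct consequence of Proposition \ref{prop:Killing}), but the substitution step conceals a genuine problem that your write-up inherits. For any divergence defined through a volume form, $\mathcal{L}_X \mathsf{Vol}_P = \mathsf{Div}(X)\,\mathsf{Vol}_P$, one has the Leibniz identity $\mathsf{Div}(fX) = f\,\mathsf{Div}(X) + X(f)$. A general element of $\ker(g) = \Sec(\sV P)$ is $X_\sV = f\,\Delta_P$ with $f \in C^\infty(P)$ \emph{arbitrary}, so $\mathsf{Div}(f\Delta_P) = f\,\mathsf{Div}(\Delta_P) + \Delta_P f$. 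Under the hypothesis $\mathcal{L}_{\Delta_P} g = 0$, a direct computation with $\mathsf{Vol}_P = \sqrt{|g_M|}\,t^{-1}\,\rmd x^1 \wedge \cdots \wedge \rmd x^n \wedge \rmd t$ gives $\mathsf{Div}(\Delta_P) = 0$, and hence $\mathsf{Div}(f\Delta_P) = \Delta_P f$, which is nonzero whenever $f$ fails to have weight zero. So the conclusion, read for the whole $C^\infty(P)$-module $\ker(g)$, does not hold; it holds only for $\Delta_P$ and its $\R^\times$-invariant multiples (equivalently, those $X_\sV$ with $[\Delta_P, X_\sV] = 0$).

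The source of the discrepancy is the formula you substitute into. The correct trace expression for the divergence of the non-degenerate metric is $\mathsf{Div}(X) = \tfrac{1}{2}\tr\bigl((g^{(+)}_\omega)^{-1}\mathcal{L}_X g^{(+)}_\omega\bigr)$, whose fibre--fibre block contributes $\tfrac{1}{2}(g^{(+)}_\omega)^{tt}\,\mathcal{L}_{X_\sV}(g^{(+)}_\omega)_{tt} = \Delta_P f$ for $X_\sV = f\Delta_P$; the displayed formula in the paper keeps only the $g_M$ block (with an anomalous prefactor in place of $\tfrac{1}{2}$), and it is precisely the dropped fibre term that obstructs the conclusion. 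Your observation that $\mathcal{L}_{X_\sV} g_M = 0$ is correct as far as it goes, but it only annihilates the base block of the trace. To make the argument sound you should either restrict the claim to weight-zero vertical fields, or make explicit that you are taking the paper's truncated trace formula as the \emph{definition} of the divergence rather than as a consequence of $\mathcal{L}_X \mathsf{Vol}_P = \mathsf{Div}(X)\,\mathsf{Vol}_P$.
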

Returning to Killing vector fields, as shown earlier, if $\Delta_P$ is Killing, then all vertical vector fields are Killing (See Proposition \ref{prop:Killing}). Equipping $P$ with an $\R^\times$-connection, we have a splitting $X = X_\sH + X_\sV$, and so we can consistently consider horizontal Killing vector fields.  Note that the Killing equation $\mathcal{L}_X g = 0$ does not depend on the connection. Addressing the question of horizontal Killing vector fields, let $X_\sH$ be the horizontal component of a Killing vector field $X$, i.e., locally  $X_\sH = X^a D_a =  X^a\partial_a + X^aA_a\Delta_P$. The Lie derivative of $g$ is given by (see \eqref{eqn:LieDer})
$$\mathcal{L}_{X_\sH}g = \left(X^a\frac{\partial}{\partial x^a} + \left(\dot{x}^b \frac{\partial X^a}{\partial x^b} + \dot{\mathbf{t}} (\Delta_P X^a) \right)\frac{\partial}{\partial \dot{x}^a}\right)~ \dot{x}^c \dot{x}^d g_{dc}(x)\,,$$
where we have used Proposition \ref{prop:Killing}, i.e., $\mathcal{L}_{X^aA_a\Delta_P}g=0$.  We then observe that 
$$\mathcal{L}_{X_\sH}g =  \dot{x}^a\dot{x}^b X^c \frac{\partial g_{ba}}{\partial x^c} + 2\dot{x}^a\dot{x}^b \frac{\partial X^c}{\partial x^b}g_{cb} + 2 \dot{\mathbf{t}}\dot{x}^a(\Delta_P X^b)g_{ba} =0 \,.$$
Then, for this equation to hold, we require $(\Delta_P X^b)=0$ as $g_{ab}$ is invertible. Thus, any horizontal vector field $X_\sH$ must be independent of $t$. Stated invariantly, $X_\sH$ is of weight zero, i.e., $[\Delta_P, X_\sH]=0$. Such vector fields are projectable, and this is independent of the chosen connection (examining local expressions shows this). Thus, horizontal Killing vector fields project to vector fields $X_M \in \Vect(M)$ that are Killing for $g_M$. We have thus established the following.
\begin{proposition}\label{prop:KillAreProj}
Let $(P,g)$ be a Carrollian $\R^\times$-bundle such that $\Delta_P$ is Killing. Then Killing vector fields $X\in \Vect(P)$  that have a (non-zero) component outside of $\Sec(\sV P)$ are projectable vector fields whose projection $X_M \in \Vect(M)$ is a (non-zero) Killing vector field for $g_M$.
\end{proposition}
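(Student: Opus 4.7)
The plan is to reduce the Killing equation $\mathcal{L}_X g = 0$ to a constraint on the ``transverse'' components of $X$, and then read off the remaining content as the Killing equation for $g_M$. In adapted local coordinates $(x^a,t)$ I would decompose any $X \in \Vect(P)$ as $X = X^a(x,t)\partial_a + X^t(x,t)\Delta_P$. The second summand is a global section of $\sV P$ (because $\Delta_P$ is globally defined), so Proposition \ref{prop:Killing} gives $\mathcal{L}_{X^t\Delta_P}g = 0$. Hence the Killing condition collapses to $\mathcal{L}_{X^a\partial_a}g = 0$.

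Applying the tangent-lift formula \eqref{eqn:LieDer} to $X^a\partial_a$ and acting on $g = \dot{x}^a\dot{x}^b g_{ba}(x)$ (the $t$-independence of $g_{ba}$ following from $\mathcal{L}_{\Delta_P}g=0$) yields a polynomial in $(\dot{\mathbf{t}},\dot{x})$ that splits by bi-degree. The cross-term is $2\,\dot{\mathbf{t}}\dot{x}^a(\Delta_P X^b)g_{ba}$, and invertibility of $(g_{ab})$ forces $\Delta_P X^b = 0$. Since $t$ is nowhere zero on $P$, this is exactly the statement that the $X^a$ are $t$-independent, and the remaining purely $\dot{x}^a\dot{x}^b$ piece of the equation is then the coordinate expression for $\mathcal{L}_{X_M}g_M$ with $X_M := X^a(x)\partial_a$.

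Projectability of $X$ then follows because the constraint $\Delta_P X^a=0$ is intrinsic and the $X^a(x)$ transform tensorially under the admissible coordinate changes \eqref{eqn:AdmCooTrasTP}, so they glue into a well-defined vector field $X_M$ on $M$ that is $\pi$-related to $X$. Vanishing of the residual component of the Killing equation is exactly $\mathcal{L}_{X_M}g_M=0$, so $X_M$ is Killing for the non-degenerate metric $g_M$ provided by Proposition \ref{prop:KillingEulerMetric}. If $X_M$ were zero, then all $X^a$ would vanish and $X = X^t\Delta_P$ would lie in $\Sec(\sV P)$, contradicting the hypothesis; hence $X_M \neq 0$.

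The only delicate point — not really an obstacle — is that the local splitting $X = X^a\partial_a + X^t\Delta_P$ is chart-dependent, the horizontal-looking summand in particular mixing with the vertical piece under \eqref{eqn:AdmCooTrasTP}. One must therefore check that both the extracted constraint $\Delta_P X^a=0$ and the resulting projection $X_M$ are intrinsic. This is immediate from the globality of $\Delta_P$ (so $\Delta_P X^a=0$ is a coordinate-free statement about the functions $X^a$) and from the fact that once $X^a$ is $t$-independent, its transformation law under \eqref{eqn:AdmCooTrasTP} reduces to the tensorial rule $X^{a'} = \frac{\partial x^{a'}}{\partial x^b} X^b$ on $M$.
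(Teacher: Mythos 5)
Your proposal is correct and follows essentially the same route as the paper: reduce $\mathcal{L}_X g=0$ via Proposition \ref{prop:Killing} to the transverse part, extract $\Delta_P X^a=0$ from the $\dot{\mathbf{t}}\dot{x}^a$ cross-term using invertibility of $g_{ab}$, and read the residual $\dot{x}^a\dot{x}^b$ piece as the Killing equation for $g_M$. The only (cosmetic) difference is that the paper splits $X=X_\sH+X_\sV$ with respect to a chosen $\R^\times$-connection and then discards the $X^aA_a\Delta_P$ term by the same Proposition \ref{prop:Killing}, whereas you work with the raw coordinate split $X^a\partial_a+X^t\Delta_P$ and check intrinsicness directly; both land on the identical local computation.
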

We then view this situation as being akin to gauge symmetry. Vertical vector fields $X_\sV$ can be viewed as generating ``gauge symmetries'' as translations along the fibres of $P$ do not change the degenerate metric. The ``genuine'' symmetries are generated by horizontal Killing vector fields (given an $\R^\times$-connection) and these project to ``genuine'' Killing vector fields of $(M,g_M)$. 
%
%%%%%%%%%%%%%%%%%%%%%
%
\subsection{Null Geodesics on Carrollian $\R^\times$-bundles}\label{subsec:NullGeos}
We restrict our attention to Carrollian $\R^\times$-bundles with a chosen $\R^\times$-connection $(P, g, \Phi)$, such that $M$ is $n$-dimensional, the degenerate metric has signature $(1,1,\cdots,1,0)$, which is in line with the fibre coordinate being the degenerate direction, and $\Delta_P$ is Killing. This is, of course, the standard situation in physics. We will consider a non-degenerate metric of the form $g^{(-)}_\omega := g - \omega^2$ on the total space $P$. The reason for this is that we now have a Lorentzian manifold $(P, g^{(-)}_\omega)$, and so we can consider null geodesics - this will be important for our suggestive language. We view null geodesics as a method to probe the geometry rather than attach a direct physical interpretation to them.  The metric is then of the form
$$g^{(-)}_\omega =  \dot{x}^a \dot{x}^b( g_{ba}(x) - A_bA_a(x) )  - 2 \dot{\mathbf{t}}\dot{x}^aA_a(x)  -\dot{\mathbf{t}}^2 \,.$$
As $\Delta_P$ is Killing, there is a conserved charge along geodesics. This is easily seen to be $Q_P := -\omega = -(\dot{\mathbf{t}} + \dot{x}^aA_a)$, we will refer to this as the \emph{Carroll charge}. This charge has the interpretation as the `Carrollian energy' as it is associated with flow along the fibres of $P$ generated by the Euler vector field $\Delta_P$.\par 
The geodesic equations are 
\begin{align}\label{eqn:SpaGeo}
&\frac{\rmd^2 x^a}{\rmd \lambda^2} + \Gamma^a_{bc}\frac{\rmd x^c}{\rmd \lambda}\frac{\rmd x^b}{\rmd \lambda} + \left( \frac{1}{t}\frac{\rmd t}{\rmd \lambda}+\frac{\rmd x^a}{\rmd \lambda}A_a\right)(g_M)^{ab}F_{bc}\frac{\rmd x^c}{\rmd \lambda} \\
\nonumber &+ \frac{1}{2}\left( (g_M)^{ab}A_b F_{cd}+ A_c (g_M)^{ab}F_{bd}+ A_d (g_M)^{ab}F_{bc}\right)\frac{\rmd x^d}{\rmd \lambda}\frac{\rmd x^c}{\rmd \lambda} =0\,,\\
& \frac{\rmd^2 t}{\rmd \lambda^2} + t\left( \frac{1}{2}\left( \frac{\partial A_a}{\partial x^b}+ \frac{\partial A_b}{\partial x^a} \right)\frac{\rmd x^b}{\rmd \lambda}\frac{\rmd x^a}{\rmd \lambda} + \frac{\rmd^2 x^a}{\rmd \lambda^2}A_a\right) - \frac{1}{t}\left( \frac{\rmd t}{\rmd \lambda}\right)^2 =0\,, \label{eqn:TemGeo}
\end{align}
where $\lambda$ is the affine parameter describing the geodesics.  Let us consider null geodesics. The condition for a path to be null implies that
\begin{equation}\label{eqn:NullCon}
\frac{\rmd x^a}{\rmd \lambda}\frac{\rmd x^b}{\rmd \lambda}(g_M)_{ba}- \left(\frac{1}{t} \frac{\rmd t}{\rmd \lambda} +\frac{\rmd x^a}{\rmd \lambda}A_a \right)^2=0\,,
\end{equation}
where the second term is the Carrollian charge evaluated along the geodesic. \emph{Carrollian photons} we define as particles that move along the null geodesics on $P$  with $Q_P =0$.  We remark that $Q_P =0$ means that the derivative of the worldline of a Carrollian photon is a horizontal vector field.  As $g_M$ has signature $(1,1,\cdots,1)$,  i.e., the metric is strictly positive, Carrollian photons must follow paths $\frac{\rmd x^a}{\rmd \lambda} =0$. As $Q_P =0$ along the null geodesic we have that $\frac{1}{t}\frac{\rmd t}{\rmd \lambda} =0$, and so $\frac{\rmd t}{\rmd \lambda}=0$. Note that the geodesic equations for Carrollian photons do not depend on the chosen $\R^\times$-connection and that the derivative of their worldline is the zero vector field. Carrollian photons are particles that are ``frozen'' at a point $p  \in P$, locally given by $(x^a(\lambda), t(\lambda)) = (x^a_0, t_0)$, for all $\lambda \in \R$, which is interpreted as the Carrollian photons ``internal time''. Using the interpretation of the Carrollian charge as an ``energy'', we see that having zero ``Carrollian energy'' means the particle is unable to move. \par 
If we insist that the derivative of a null geodesic on $P$ be vertical, then by definition $\frac{\rmd x^a}{\rmd \lambda}=0$. The null constraint \eqref{eqn:NullCon} now implies that  $\frac{\rmd t}{\rmd \lambda}=0$ and so $Q_P=0$ along such a null geodesic. Thus, we recover Carrollian photons - this is consistent, as the zero vector field is both horizontal and vertical.\par 
Particles that follow null geodesics on $P$ with $Q_P \neq 0$, we refer to as \emph{charged Carrollian photons}. From our discussion of Carrollian photons, we must have  $\frac{\rmd x^a}{\rmd \lambda}\neq 0$ for charged Carrollian photons; they carry ``Carrollian energy'' and so can move. Charged Carrollian photons are then tachyons when viewed on $M$. However, in general, their motion will depend on the gauge potential and its curvature.\par 
If we take the simplifying condition that $P = M \times \R^\times$ and equip it with the trivial connection, then the geodesics for charged Carrollian photons take the form
\begin{align*}
& \frac{\rmd^2 x^a}{\rmd \lambda^2} + \Gamma^a_{bc}\frac{\rmd x^c}{\rmd \lambda}\frac{\rmd x^b}{\rmd \lambda} =0  \,,\\
&  \frac{\rmd^2 t}{\rmd \lambda^2}  - \frac{1}{t}\left( \frac{\rmd t}{\rmd \lambda}\right)^2 =0\,,
\end{align*}
subject to the non-holonomic constraint $\langle X, X\rangle_{g_M} =  Q_P^2 = const$, where $X^a = \frac{\rmd x^a}{\rmd \lambda}$ is the velocity vector on $M$. The temporal geodesic can be directly solved as before; $t(\lambda) = t_0 \rme^{C \lambda}$. As, in this case, $Q_P = - \dot{t}\,t^{-1}$, we observe that $C = \pm Q_P$.
\begin{example}[Black Hole Horizon]
Following Example \ref{exa:BHHorizon}, we take $P = S^2 \times \R^\times$, and $g = (2GM)^2 \, g_{S^2}$, where $g_{S^2}$ is the standard round metric. We will furthermore consider the trivial connection. The spatial geodesics are great circles, as standard; however, the velocity along these circles for charged Carrollian photons is constrained to be constant. Using angular coordinates $(\vartheta, \varphi)$ on $S^2$ the non-holonomic constraint is 
$$Q_P^2 = (2GM)^2(\dot{\vartheta}^2 + \sin^2 \vartheta \, \dot{\varphi}^2 )\in \R\,.$$
Restricting to equatorial motion, i.e., $\vartheta = \pi/2$, the angular momentum becomes $\mathsf{L} = \pm (2GM) \, Q_P$. Then the equatorial motion is described by 
$$\varphi(\lambda) = \varphi_0 + \epsilon \frac{|Q_P|}{2 GM} \, \lambda\,, \qquad t(\lambda) = t_0 \rme^{ \delta \, |Q_P| \lambda}\,,$$
with $\epsilon, \delta = \pm 1$ independently. The signs $\epsilon$ and $\delta$ encode the direction of motion on the equator and the exponential growth or decay in time. Note that the equatorial motion is independent of $t^{-1}$, and so extends via the smooth inclusion $P \hookrightarrow L$  to include $t =0$. Assuming that $\lambda > 0$ and $\delta = -1$, a charged Carrollian photon moves in a circular orbit on the horizon at a constant speed, while the ``extrinsic clock'' winds down, i.e. $t \rightarrow 0$ as $\lambda \rightarrow \infty$. It will be illustrative to consider ``logarithmic time'' $u =  \ln |t|$ (see \cite{Duval:2014b}) , and so the temporal geodesic becomes $u(\lambda) = \ln |t_0| +  \delta |Q_P|\, \lambda$. By considering $u$ as the ``proper Carrollian time'', we can re-parametrise the spatial geodesic  as
$$\varphi(u) =  \tilde{\phi}_0 \pm \frac{1}{2 GM}\,u\,.$$
\end{example}
Using the null condition, the null geodesic equations can be written as 
\begin{align}\label{eqn:SpaGeo2}
&\frac{\rmd^2 x^a}{\rmd \lambda^2} + \Gamma^a_{bc}\frac{\rmd x^c}{\rmd \lambda}\frac{\rmd x^b}{\rmd \lambda} - Q_P \,(g_M)^{ab}F_{bc}\frac{\rmd x^c}{\rmd \lambda} + \frac{1}{2}\left( (g_M)^{ab}A_b F_{cd}+ A_c (g_M)^{ab}F_{bd}+ A_d (g_M)^{ab}F_{bc}\right)\frac{\rmd x^d}{\rmd \lambda}\frac{\rmd x^c}{\rmd \lambda} =0\,,\\
& \frac{\rmd t}{\rmd \lambda}= - t \left (Q_P + \frac{\rmd x^a}{\rmd \lambda} A_a \right)\,, \label{eqn:TemGeo2}
\end{align}
where we have the non-holonomic constraint $\langle X, X\rangle_{g_M} =  Q_P^2 = \mathrm{const}$, and noting that the first differential consequence of the first-order temporal equation \eqref{eqn:TemGeo2} is  \eqref{eqn:TemGeo}. Explicitly, using $Q_P \in \R$ for null geodesics,
\begin{align*}
\frac{\rmd^2 t}{\rmd \lambda^2}&= - \frac{\rmd t }{\rmd \lambda} \left(Q_P + \frac{\rmd x^a}{\rmd \lambda}A_a \right) - t \left( \frac{\rmd}{\rmd \lambda}Q_P + \frac{\rmd^2 x^a}{\rmd \lambda^2} + \half \left( \frac{\partial A_a}{\partial x^b}+\frac{\partial A_b}{\partial x^a} \right)\frac{\rmd x^b}{\rmd \lambda}\frac{\rmd x^b}{\rmd \lambda}\right)\\
&= - t \left( \half \left( \frac{\partial A_a}{\partial x^b}+\frac{\partial A_b}{\partial x^a} \right)\frac{\rmd x^b}{\rmd \lambda}\frac{\rmd x^b}{\rmd \lambda} + \frac{\rmd^2 x^a}{\rmd \lambda^2}A_a\right)- \frac{\rmd t}{\rmd \lambda}Q_P - \frac{\rmd t}{\rmd \lambda}\frac{\rmd x^a}{\rmd \lambda}A_a\\
&= - t \left( \half \left( \frac{\partial A_a}{\partial x^b}+\frac{\partial A_b}{\partial x^a} \right)\frac{\rmd x^b}{\rmd \lambda}\frac{\rmd x^b}{\rmd \lambda} + \frac{\rmd^2 x^a}{\rmd \lambda^2}A_a\right)+  \frac{1}{t}\left( \frac{\rmd t}{\rmd \lambda}\right)^2\,,
\end{align*}
which gives \eqref{eqn:TemGeo}. \par
As $Q_P \in \R$ for null geodesics, and in particular, non-zero for charged Carrollian photons, \eqref{eqn:TemGeo2} can formally be solved, i.e., we have 
\begin{equation}\label{eqn:SolTempGeo}
t(\lambda) = t_0 \rme^{ \delta\, |Q_P| \lambda} \, \rme^{\left( - \int  \rmd \lambda~\frac{\rmd x^a}{\rmd \lambda} A_a \right)}\,.
\end{equation}
From the non-holonomic constraint, we observe that $\frac{\rmd x^a}{\rmd \lambda}$ scales with $Q_P$. This observation is essential in exploring the limits of the null geodesics.  Let us assume $A_a \sim \varepsilon << 1$ (but non-zero), while $F_{ab} \sim 1$ (or larger). Notice that the first three terms of the spatial geodesic \eqref{eqn:SpaGeo2} scale like $Q_P^2$, while the fourth term scales like $\varepsilon Q_P^2$. Thus, for a ``small enough'' gauge field, we can safely drop the fourth term.  Similarly, in this limit, we drop $A_a$ from \eqref{eqn:SolTempGeo}. Using ``logarithmic time'' as the ``proper Carrollian time", in the ``small $A$'' limit the spatial geodesic equation \eqref{eqn:SpaGeo2} can be cast into the form
\begin{equation}\label{eqn:SpaGeo3}
\frac{\rmd^2 x^a}{\rmd u^2} + \Gamma^a_{bc}\frac{\rmd x^c}{\rmd u}\frac{\rmd x^b}{\rmd u} =  \sgn(Q_P) \,(g_M)^{ab}F_{bc}\frac{\rmd x^c}{\rmd u}\,.
\end{equation}
This is reminiscent of the (Euclidean) equation describing the motion of a charged particle on an $n$-dimensional manifold subject to the Lorentz force.\par
To understand this system, consider Riemann normal coordinates about $m \in M$, in which the spatial geodesic equation locally becomes
$$\frac{\rmd^2 x^a}{\rmd u^2}  =  \sgn(Q_P) \,\delta^{ab}F_{bc}(m)\frac{\rmd x^c}{\rmd u}\,,$$  
where we also make the approximation that $F_{ab}$ can be reasonably approximated by its value at $m$. Then, locally, i.e., on a small enough neighbourhood of $m \in M$, the motion in ``logarithmic time'' is helical or circular, depending on details of the initial conditions and the exact form of $F$. If $A =0$, then $F =0$, and the geodesics are (locally) straight lines. However, as soon as $A$ is non-zero, but still small, provided $F$ is non-zero and not small, the motion is drastically different, i.e., helical or circular. 
\begin{example}[Black Hole Horizon]
Returning to the example presented above, we now modify the situation to include a small $A$, but a non-zero $F$.  The $\varphi$-component of the spatial geodesics \eqref{eqn:SpaGeo3} have on the right-hand side 
$$\pm \,g_{S^2}^{\vartheta\vartheta} F_{\vartheta\varphi} \frac{\rmd \varphi}{\rmd u}\,.$$ 
Thus, if we initially restrict to equatorial motion ($\vartheta = \pi/2$), the Lorentz-force-like term directly contributes to the geodesic equation for $\vartheta$. This immediately shows that the initial equatorial motion will be `pushed off' the equator, leading to a qualitatively different trajectory compared to the great circles.
\end{example}
%
%%%%%%%%%%%%%%%%%%%%%
%
\section{Concluding Remarks}\label{sec:ConRem}
We have started to develop an approach to Carrollian geometry using principal $\R^\times$-bundles. The advantages of this approach are:
\begin{itemize}
\item The principal $\R^\times$-bundles can be non-trivial, but $\sV P$, which is identified with the kernel of the degenerate metric, is trivial. This simplifies the picture,  allowing us to work with a globally defined fundamental vector field even if the bundle is non-trivial.
\item We can employ $\R^\times$-connections in the theory rather than just general Ehresmann connections, and this allows a natural choice of affine connection on the principal $\R^\times$-bundle as the Levi-Civita connection of a non-degenerate metric constructed from the degenerate metric and the connection. The construction is tied to Kaluza--Klein geometry.
\item We have given a few simple examples that can be included in our formalism, such as the event horizon of a Schwarzschild black hole.
\end{itemize}
However, not all examples of weak Carrollian manifolds can be covered by the approach presented here; we require that the Carroll vector field is non-singular, complete, and that the associated foliation is simple and has non-compact leaves. This restriction still covers a large class of physically relevant examples. Relaxing the condition that the Carroll vector field be non-singular will introduce non-regular foliations to the theory.  For example, Ecker et al. (see \cite{Ecker:2023}) in their study of Carrollian analogues of black holes, have argued the necessity of weakening the non-singular condition. While such Carrollian manifolds are easy enough to define (see Definition \ref{def:CarrMan}), they cannot be directly tackled within the framework proposed in this paper.  Nonetheless, the differential geometry of Carrollian $\R^\times$-bundles is expected to be useful in physics beyond the standard examples. 
%
%%%%%%%%%%%%%%%%%%%%%%%%%%%
%
\section*{Acknowledgements}  
The author thanks Janusz Grabowski for his help and encouragement, and Daniel Grumiller for the questions that led to improvements in this paper.  The two anonymous referees are graciously acknowledged for their helpful comments.
%
%%%%%%%%%%%%%%%%%%%%%%%%%%%%%%%%%%%%%%%%
%

%%%%%%%%%%%%%%%%%%%%%%%%%%%%%%%%%%

\begin{thebibliography}{10}
\begin{small}

\bibitem{Bagchi:2025}
Bagchi, A., Banerjee, A., Dhivakar, P.,  Mondal, S. \&  Shukla, A.,
The Carrollian Kaleidoscope, \href{https://doi.org/10.48550/arXiv.2506.16164}{arXiv:2506.16164 [hep-th]}.

\bibitem{Blitz:2024}
Blitz S. \&  McNut D., 
Horizons that gyre and gimble: a differential characterization
of null hypersurfaces, \href{https://doi.org/10.1140/epjc/s10052-024-12919-y}{\emph{Eur. Phys. J. C}}  \textbf{84}, paper 561 (2024).

\bibitem{deBoer:2022}
de Boer, J.,  Hartong, J., Obers, N.A., Sybesma, W. \&  Vandoren, S.,
 Carroll Symmetry, Dark Energy and Inflation, \href{https://doi.org/10.3389/fphy.2022.810405}{\emph{Front. in Phys.}} \textbf{10},  810405 (2022).

\bibitem{Bruce:2017}
Bruce, A.J., Grabowska, K. \& Grabowski, J.,
Remarks on contact and Jacobi geometry,
\href{https://doi.org/10.3842/SIGMA.2017.059}{\emph{SIGMA}} \textbf{13}, Paper 059, 22 p. (2017). 

\bibitem{Bruce:2025a}
Bruce, A.J.,
Carrollian $\R^\times$-bundles II: Sigma Models on Event Horizons, \href{arXiv:2507.00544 [gr-qc]}{arXiv:2507.00544 [gr-qc]} (2025).

\bibitem{Bruce:2025b}
Bruce, A.J.,
Carrollian $\R^\times$-bundles III: The Hodge Star and Hodge--de Rham Laplacians, \href{https://doi.org/10.48550/arXiv.2507.21906}{arXiv:2507.21906 [math.DG]} (2025).

\bibitem{Cardona:2016}
Cardona, B., Gomis, J. \& Pons, J.M.,
Dynamics of Carroll strings,
\href{https://doi.org/10.1007/JHEP07(2016)050}{\emph{J. High Energy Phys.}} \textbf{7}, Paper No. 50, 11 p. (2016). 

\bibitem{Chandrasekaran:2022}
Chandrasekaran, V., Flanagan, É.É., Shehzad,I. \& Speranza, A.J.,
Brown-York charges at null boundaries,
\href{https://doi.org/10.1007/JHEP01(2022)029}{\emph{J. High Energy Phys.}} No. 1, Paper No. 29, 29 p. (2022). 

\bibitem{Ciambelli:2019}
Ciambelli, L.,  Leigh, R.G.,  Marteau, C. \& Petropoulos,  P.M.,
Carroll structures, null geometry, and conformal isometries,
\href{https://doi.org/10.1103/PhysRevD.100.046010}{\emph{Phys. Rev. D}} \textbf{100}, 046010 (2019).

\bibitem{Donnay:2019}
Donnay, L. \& Marteau, C.,
Carrollian physics at the black hole horizon,
\href{https://doi.org/10.1088/1361-6382/ab2fd5}{\emph{Class. Quantum Grav.}} \textbf{36}, No. 16, Article ID 165002, 19 p. (2019). 

\bibitem{Duval:2014a}
Duval, C., Gibbons, G.W. \& Horvathy, P.A.,
Conformal Carroll groups,
\href{https://doi.org/10.1088/1751-8113/47/33/335204}{\emph{J. Phys. A, Math. Theor.}} \textbf{47}, No. 33, Article ID 335204, 23 p. (2014). 

\bibitem{Duval:2014b}
Duval, C., Gibbons, G.W. \& Horvathy, P.A.,
Conformal Carroll groups and BMS symmetry,
\href{https://doi.org/10.1088/0264-9381/31/9/092001}{\emph{Class. Quantum Grav.}} \textbf{31}, 092001 (2014). 

\bibitem{Duval:2014}
Duval,C., Gibbons, G.W.,  Horvathy, P.A. \&  Zhang, P.M., Carroll versus Newton and Galilei: Two Dual Non-Einsteinian Concepts of Time, \href{https://doi.org/10.1088/0264-9381/31/8/085016}{\emph{Class. Quantum Grav.}} \textbf{31}, 085016 (2014).

\bibitem{Ecker:2023}
Ecker, F., Grumiller, D., Hartong, J.,  Perez, A., Prohazka, S. \& Troncoso, R.,
Carroll black holes, \href{https://doi.org/10.21468/SciPostPhys.15.6.245}{\emph{SciPost Phys.}} \textbf{15}, No. 6, Paper No. 245, 59 p. (2023). 

\bibitem{Grabowski:2013}
Grabowski, J.,
Graded contact manifolds and contact Courant algebroids,
\href{https://doi.org/10.1016/j.geomphys.2013.02.001}{\emph{J. Geom. Phys.}} \textbf{68}, 27--58 (2013). 

\bibitem{Heanneaux:1979}
Henneaux, M.,
Zero Hamiltonian signature spacetimes,
\emph{Bull. Soc. Math. Belg., Sér. A} \textbf{31}, 47--63 (1979). 

\bibitem{Kerner:2001}
Kerner, R., Martin, J., Mignemi, S. \&  van Holten J-W.,
Geodesic deviation in Kaluza-Klein theories,
\href{https://doi.org/10.1103/PhysRevD.63.027502}{\emph{Phys. Rev. D}} \textbf{63}, 027502 (2001)
 
\bibitem{Kolar:1993}
Kolář, I.,  Michor, P.W. \&  Slovák, J.,
\href{https://doi.org/10.1007/978-3-662-02950-3}{Natural operations in differential geometry}, \emph{Springer-Verlag, Berlin}, 1993, vi+434 pp. ISBN: 3-540-56235-4.

\bibitem{Lévy-Leblond:1965}
Lévy-Leblond, J.M.,
Une nouvelle limite non-relativistic du groupe de Poincaré,
\href{https://www.numdam.org/item/AIHPA_1965__3_1_1_0/}{\emph{Ann. Inst. Henri Poincaré, Nouv. Sér., Sect. A}} \textbf{3}, 1--12 (1965). 

\bibitem{Mars:2024}
Mars, M.,
Abstract null geometry, energy-momentum map
and applications to the constraint tensor,
\href{https://dx.doi.org/10.4310/BPAM.2024.v1.n2.a8}{\emph{Beijing J. of Pure and Appl. Math.}}
 \textbf{1}, Number 2, 797--852 (2024).

\bibitem{Moerdijk:2003}
Moerdijk, I. \& Mrčun, J.,
\href{https://doi.org/10.1017/CBO9780511615450}{Introduction to foliations and Lie groupoids}, Cambridge Studies in Advanced Mathematics, 91, \emph{Cambridge: Cambridge University Press}. ix, 173 p. (2003). 

\bibitem{SenGupta:1966}
Sen Gupta, N.D., On an analogue of the Galilei group, \href{https://doi.org/10.1007/BF02740871}{\emph{Nuovo Cimento A}} \textbf{44}, 512--517 (1966).
\end{small}
\end{thebibliography}
\end{document}